\newtheorem{thm}{Theorem}
\newtheorem{lem}{Lemma}
\newtheorem{prop}{Proposition}
\newtheorem{defn}{Definition}
\newtheorem{remark}{Remark}
\newtheorem{claim}{Claim}
\newtheorem{example}{{Example}}
\title[combinatorial Ricci curvature and Lin-Lu-Yau's Ricci Curvature]{Relation between combinatorial Ricci curvature and Lin-Lu-Yau's Ricci Curvature on cell complexes}
\author{Kazuyoshi Watanabe and Taiki Yamada$^{corresponding\ author}$}
\subjclass{Primary 05E45; Secondary 53B21}
\keywords{Graph theory, Optimal transport, Cell complex, Ricci curvature}
\address{Mathematical Institute,\\
Tohoku University, Sendai, Miyagi, 985-8578 Japan.}
\email{kazuyoshi.watanabe.q5@dc.tohoku.ac.jp}
\address{Mathematical Institute,\\
Tohoku University, Sendai, Miyagi, 985-8578 Japan.}
\email{mathyamada@dc.tohoku.ac.jp}
\begin{document}

\maketitle
\begin{abstract}
In this paper we compare the combinatorial Ricci curvature on cell complexes and Lin-Lu-Yau's Ricci curvature defined on graphs. On a cell complex, the combinatorial Ricci curvature is introduced by the Bochner-Weitzenb\"{o}ck formula. A cell complex corresponds to a graph such that the vertices are cells and the edges are vectors on the cell complex. We compare these two kinds of Ricci curvatures by the coupling and the Kantorovich duality.
\end{abstract}
\section{Introduction}

In the Riemannian geometry, the curvature plays an important role. Especially the Ricci curvature is studied in geometric analysis on Riemannian manifolds and there are many results on manifolds with non-negative Ricci curvature or with the Ricci curvature bounded below. In the analysis of the Ricci curvature the Bochner-Weitzenb\"{o}ck formula is useful. This formula gives a relation between the curvature tensor and the Hodge Laplacian on smooth differential forms.

There are some definitions of generalized Ricci curvature, one of which is Ollivier's coarse Ricci curvature (see \cite{Ol1}). It is formulated by the 1-Wasserstein distance on a metric space $(X, d)$ with a random walk $m=\left\{m_{x} \right\}_{x \in X}$, where $m_{x}$ is a probability measure on $X$. The coarse Ricci curvature is defined as, for two distinct points $x, y \in X$, 
	\begin{eqnarray*}
	\kappa(x, y) := 1 - \cfrac{W(m_{x}, m_{y})}{d(x, y)},
	\end{eqnarray*}
  where $W$($m_{x}, m_{y}$) is the $1$-Wasserstein distance between $m_{x}$ and $m_{y}$. In 2010, Lin, Lu and Yau \cite{linluyau} modified the definition of Ollivier's coarse Ricci curvature on graphs. They showed some properties about this curvature, such as the Cartesian product and Erd\"{o}s-Renyi's random graph. In 2012, Jost and Liu \cite{Jo2} studied the relation between the Ricci curvature and the local clustering coefficient. Recently, the Ricci curvature on graphs was applied to directed graph \cite{Yamada}, internet topology and so on. In this paper, we call this Ricci curvature {\em the LLY-Ricci curvature}, and denote it by $\kappa$.

On the other hand, a cell complex is studied as a combinatorial space and applied to various works. In a recent work, Forman established some discrete analogues of differential geometry to cell complexes. In \cite{forman-morse}, the discrete Morse theory was established and the Morse inequality, which is the relation between critical cells and the homology of the cell complex, was constructed. The theory has various practical applications in diverse fields of applied mathematics and computer science. By using the discrete vector field the discrete Morse theory is extended to the discrete Novikov-Morse theory \cite{forman-novikov}, and in this theory he defined a differential form on the cell complex. A differential form is not defined as the cochain of the cell complex but as a linear map on the chain of the cell complex. 

 In \cite{kazu}, the first name author introduce the definition of the combinatorial Ricci curvature on a cell complex by the Bochner-Weitzenb\"{o}ck formula on combinatorial differential forms. A combinatorial 1-form $\omega$ has a value at a pair of cells $(\tau >\sigma)$ and we call such a pair $(\tau>\sigma)$ a vector provided that $\sigma$ is a face of $\tau$ and $\operatorname{dim} \sigma +1 =\operatorname{dim} \tau $. For the definition of the covariant derivative $|\nabla \omega|$ of a combinatorial 1-form, we present the parallel vectors, that are called by 0- and 2-neighbor vectors. We define the covariant of a 1-form $|\nabla \omega|$ and the Laplacian for the absolute value of combinatorial 1-form $\Delta^\flat |\omega|^2$ as the difference between the components of parallel vectors. Then we define the combinatorial Ricci curvature by 
 \begin{eqnarray}
\operatorname{Ric} (\omega) (\tau>\sigma) = \langle \Delta \omega, \omega \rangle (\tau>\sigma) -\frac{1}{2} |\nabla \omega|^2  (\tau>\sigma)+ \frac{1}{2} \Delta^\flat |\omega|^2(\tau>\sigma).\nonumber
\end{eqnarray}
  For a graph and for a 2-dimensional cell complex decomposing a closed surface, we have the Gauss-Bonnet theorem for this combinatorial Ricci curvature \cite{kazu}.

For a cell complex $M$, we consider the graph $G_M$ such that the set of vertices of $G_M$ is the set of cells in $M$, and the set of edges of $G_M$ is the set of vectors in $M$. We call $G_M$ the corresponding graph to a cell complex $M$.

We study the relationship between our combinatorial Ricci curvature on $M$ and the LLY-Ricci curvature on $G_M$. The LLY-Ricci curvature is defined by the Wasserstein distance between probability measures. One of our main theorems is stated as follows.

\begin{thm}\label{relate}
Let $\tau^{p+1}$ and $\sigma^{p}$ respectively be a $(p+1)$-cell and a $p$-cell of $M$ such that $\tau>\sigma$. Then we have
\begin{eqnarray}
\kappa (\tau, \sigma) = \frac{\operatorname{Ric}(\tau>\sigma)}{d_\tau \vee d_\sigma}+2\left( \frac{1}{d_\tau \wedge d_\sigma} - \frac{1}{d_\tau \vee d_\sigma} \right) + \frac{d_\tau \wedge d_\sigma}{d_\tau \vee d_\sigma} -1,
\end{eqnarray}
where $d_\tau \wedge d_\sigma = \min \{d_\tau, d_\sigma \}$ and $d_\sigma \vee d_\sigma = \max \{ d_\tau, d_\sigma\}$.
\end{thm}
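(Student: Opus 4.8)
The plan is to compute $\kappa(\tau,\sigma)$ directly from its optimal-transport definition on $G_M$ and then match the resulting combinatorial expression against the Bochner definition of $\operatorname{Ric}(\tau>\sigma)$. Writing $m_\tau^\alpha$ for the $\alpha$-lazy probability measure that keeps mass $\alpha$ at $\tau$ and spreads mass $(1-\alpha)/d_\tau$ over each neighbour of $\tau$ in $G_M$ (and likewise $m_\sigma^\alpha$), the LLY-Ricci curvature is $\kappa(\tau,\sigma)=\lim_{\alpha\to1}\frac{1}{1-\alpha}\bigl(1-W(m_\tau^\alpha,m_\sigma^\alpha)/d(\tau,\sigma)\bigr)$, and since $\tau>\sigma$ gives an edge of $G_M$ we have $d(\tau,\sigma)=1$. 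The first step is to record the geometry of $G_M$ near this edge: because an edge of $G_M$ always joins cells whose dimensions differ by one, $G_M$ is bipartite with colour classes given by the parity of dimension, so there are no triangles through $(\tau,\sigma)$; the neighbour sets $N(\tau)$ and $N(\sigma)$ lie in opposite colour classes, every $z\in N(\tau)$ satisfies $d(z,\sigma)=2$ and every $w\in N(\sigma)$ satisfies $d(w,\tau)=2$, while $z\in N(\tau)$ and $w\in N(\sigma)$ are at distance $1$ precisely when they are joined by a vector of $M$. These last pairs are exactly the parallel vectors (the $0$- and $2$-neighbour vectors) entering $|\nabla\omega|^2$ and $\Delta^\flat|\omega|^2$, which is the bridge to the curvature on $M$.

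For the upper bound I would exhibit an explicit coupling between $m_\tau^\alpha$ and $m_\sigma^\alpha$: transport the bulk mass from $\tau$ to $\sigma$ along the edge at cost $1$, retaining on the overlap $\{\tau,\sigma\}$ the mass already required by the target; push each adjacent pair $z\sim w$ across the corresponding vector at cost $1$, moving the common amount $(1-\alpha)/(d_\tau\vee d_\sigma)$; and send the remaining neighbour mass — both the unmatched neighbours and the surplus on matched pairs when $d_\tau\neq d_\sigma$ — to $\tau$ or to $\sigma$ at cost $2$. Summing transported mass times distance gives an upper estimate for $W(m_\tau^\alpha,m_\sigma^\alpha)$ that is affine in $1-\alpha$, with coefficients built from $d_\tau$, $d_\sigma$ and the number of parallel vectors.

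For the matching lower bound I would invoke the Kantorovich duality: normalise $f(\sigma)=0$, set $f(\tau)=1$, put $f\equiv2$ on the unmatched neighbours of $\tau$ and $f\equiv-1$ on the unmatched neighbours of $\sigma$, and on each matched pair $z\sim w$ choose intermediate values saturating $|f(z)-f(w)|\le1$ across that vector. Evaluating $\sum_x f(x)\,(m_\tau^\alpha-m_\sigma^\alpha)(x)$ reproduces the previous estimate, so the two bounds coincide and determine $W(m_\tau^\alpha,m_\sigma^\alpha)$ exactly; dividing by $1-\alpha$ and letting $\alpha\to1$ yields a closed form for $\kappa(\tau,\sigma)$ in terms of $d_\tau$, $d_\sigma$ and the count of parallel vectors. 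The mass imbalance $(1-\alpha)/d_\tau$ versus $(1-\alpha)/d_\sigma$ on a matched pair, of which only $(1-\alpha)/(d_\tau\vee d_\sigma)$ can be carried across at cost $1$ while the surplus $(1-\alpha)\bigl(1/(d_\tau\wedge d_\sigma)-1/(d_\tau\vee d_\sigma)\bigr)$ must travel at cost $2$, is exactly what produces the correction terms $2\bigl(1/(d_\tau\wedge d_\sigma)-1/(d_\tau\vee d_\sigma)\bigr)$ and $(d_\tau\wedge d_\sigma)/(d_\tau\vee d_\sigma)-1$.

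It then remains to expand the Bochner definition of $\operatorname{Ric}(\tau>\sigma)$, with $\omega$ the unit $1$-form supported on the vector $(\tau>\sigma)$, into the same combinatorial data: the inner product $\langle\Delta\omega,\omega\rangle$ supplies the degree count, while $|\nabla\omega|^2$ and $\Delta^\flat|\omega|^2$ are assembled from the $0$- and $2$-neighbour vectors. Substituting this, the parallel-vector count cancels against the transport count and the stated identity follows. I expect the main obstacle to be twofold: first, verifying that the coupling and the dual potential genuinely attain the same value, which forces a careful bookkeeping of the surplus mass on matched pairs when $d_\tau\neq d_\sigma$ (the origin of the $\vee$/$\wedge$ asymmetry); and second, checking that the quantity counting adjacent neighbour pairs in $G_M$ is \emph{precisely} the quantity appearing in the Bochner expression for $\operatorname{Ric}(\tau>\sigma)$, so that the leading term is $\operatorname{Ric}(\tau>\sigma)/(d_\tau\vee d_\sigma)$ with no residual.
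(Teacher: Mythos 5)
Your overall architecture is exactly the paper's: an explicit coupling gives an upper bound on $W(m^\alpha_\tau,m^\alpha_\sigma)$ (hence a lower bound on $\kappa$), a Kantorovich dual potential of the same shape as yours gives the matching lower bound on $W$, and the identity $\operatorname{Ric}(\tau>\sigma)=2-\#N_0(\tau>\sigma)$ converts the count of unmatched neighbours into the curvature. However, your coupling is not feasible as described. You send the bulk mass $\alpha$ from $\tau$ to $\sigma$ at cost $1$, which saturates the target demand at $\sigma$; the target demand at $\tau$ is only $(1-\alpha)/d_\sigma$. Consequently the surplus mass sitting on the neighbours of $\tau$ (both the unmatched $0$-neighbour cells and the excess $(1-\alpha)\bigl(1/(d_\tau\wedge d_\sigma)-1/(d_\tau\vee d_\sigma)\bigr)$ on each matched pair) \emph{cannot} be absorbed by $\tau$ or $\sigma$ at cost $2$: the only remaining unmet targets are the unmatched neighbours $\tau_2$ of $\sigma$, and a neighbour of $\tau$ other than $\sigma$ is at distance $3$ from such a $\tau_2$ (distance $1$ is excluded by quasiconvexity, distance $2$ by bipartiteness). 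The paper's coupling routes this surplus at cost $3$, and that is what the dual bound forces; if your cost-$2$ routing were a genuine coupling it would yield a value of $\kappa$ strictly larger than the one certified by the $1$-Lipschitz potential, a contradiction. (One can trade some of this cost-$3$ transport against rerouting part of $\tau$'s mass to the $\tau_2$'s at cost $2$, but the total is unchanged; what one cannot do is dump the surplus on $\{\tau,\sigma\}$.)

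The second gap is the combinatorial identity you correctly flag as ``the main obstacle'' but do not prove: that the $2$-neighbour vectors form a perfect matching between $N(\tau)\setminus(\{\sigma\}\cup\{\text{$0$-neighbours}\})$ and $N(\sigma)\setminus(\{\tau\}\cup\{\text{$0$-neighbours}\})$, i.e.\ $d_\tau-n_\tau-1=d_\sigma-n_\sigma-1=\#N_2(\tau>\sigma)$. This is the paper's Proposition \ref{relation between cell and degree}; it needs both quasiconvexity (each $\sigma_1<\tau$ lies over a unique $\rho<\sigma$) and Lundell's lemma that exactly two $p$-cells sit between a $(p+1)$-cell and a $(p-1)$-face (each $\mu>\tau$ contains a unique second $\tau'>\sigma$). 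Without it, a single neighbour of $\tau$ could be adjacent to several neighbours of $\sigma$, your mass bookkeeping on ``matched pairs'' would not close, and the leading term would not reduce cleanly to $\operatorname{Ric}(\tau>\sigma)/(d_\tau\vee d_\sigma)$. Your dual potential is essentially the paper's and is fine once one checks, again via bipartiteness and quasiconvexity, that the value gap of $3$ between an unmatched neighbour of $\tau$ and an unmatched neighbour of $\sigma$ is realised by an actual distance of $3$.
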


For the estimate of a lower bound of the LLY-Ricci curvature, we construct the coupling between two probability measures around cells. Since the supports of two probability measures do not intersect each other, this coupling is calculated by combinatorially. Moreover we estimate an upper bound of the LLY-Ricci curvature by using the combinatorial Ricci curvature. From the Kantorovich duality, a 1-Lipschitz function gives the lower bound of the Wasserstein distance between probability measures. Since two bounds coincide with each other, we prove Theorem \ref{relate}.

In section 5, we see that the LLY-Ricci curvature gives the lower bound of the first nonzero eigenvalue of the Laplacian on a cell complex. By applying the Kantorovich duality to the eigenfunction with respect to the first nonzero eigenvalue we estimate the Wasserstein distance between two probability measures. The estimate is useful for computing practical cell complexes. Actually we see the example of a cell complex with the positive combinatorial Ricci curvature, and check the lower estimation of the first nonzero eigenvalue of the Laplacian.

\section*{Acknowledgment} 
The authors thank their supervisors, Professor Takashi Shioya, for his continuous support and providing important comments. They also thank the referee for his/her valuable comments and suggestions.

\section{Definition of combinatorial Ricci curvature}
\subsection{Combinatorial differential form}
In this section, we present a differential form on a cell-complex introduced in \cite{forman-novikov}. Let $M$ be a cell complex. For cells $\sigma$ and $\tau$, we write $\sigma < \tau$ or $\tau > \sigma$ if $\sigma$ is contained in the boundary of $\tau$. First, we define a regular cell complex.
\begin{defn}
We say $M$ is a {\em regular cell complex}, if for each p-cell $\sigma$ of $M$ the characteristic map $h_\sigma: e^p \to M$ maps $e^p$ homeomorphically onto its image, where $e^p$ is a closed ball in the p-dim Euclidean space.
\end{defn}
Throughout the paper, we always assume that $M$ is a regular cell complex. If not, it will be clearly stated.  Let the dimension of $M$ be $n$, and 
\begin{eqnarray}
0 \longrightarrow C_n(M) \overset{\partial}{\longrightarrow} C_{n-1}(M) \overset{\partial}{\longrightarrow} \cdots \overset{\partial}{\longrightarrow} C_0(M) \longrightarrow 0 \nonumber
\end{eqnarray}
be the real cellular chain complex of $M$. We set
\begin{eqnarray}
C_* (M) =\bigoplus_p C_p (M).
\end{eqnarray}
A linear map $\omega : C_*(M) \rightarrow C_*(M)$ is said to be of {\it degree} $d$ if for all $p=1,...,n$,
\begin{eqnarray}
\omega (C_p(M)) \subset C_{p-d} (M).
\end{eqnarray} 
We say that a linear map $\omega$ of degree $d$ is $local$ if, for each $p$ and each oriented $p$-cell $\alpha$, $\omega(\alpha)$ is a linear combination of oriented ($p-d$)-cells that are faces of $\alpha$.
\begin{defn}
For $d\geq0$, we say that a local linear map $\omega : C_*(M) \rightarrow C_*(M)$ of degree $d$ is {\it a combinatorial differential $d$-form}, and we denote the space of combinatorial differential $d$-forms by $\Omega ^d (M)$.
\end{defn}
We define {\it the differential of combinatorial differential forms}
\begin{eqnarray}
d:\Omega^d(M) \rightarrow \Omega^{d+1}(M)
\end{eqnarray}
as follows. For any $\omega \in \Omega^d(M)$ and any $p$-chain $c$, we define $(d\omega) (c) \in C_{p-(d+1)} (M)$ by
\begin{eqnarray}
(d\omega)(c) = \partial (\omega(c)) - (-1)^d \omega (\partial c).
\end{eqnarray}
That is,
\begin{eqnarray}
d\omega = \partial \circ \omega - (-1)^d \omega \circ \partial.
\end{eqnarray}

Let us define an inner product on $C_* (M)$. For any two $p$-cells $\sigma,\sigma'$, we set an inner product as
\begin{eqnarray}
\langle \sigma, \sigma' \rangle = \delta _{\sigma,\sigma'},
\end{eqnarray}
where $\delta _{\sigma,\sigma'}$ is the Kronecker delta, that is, $\delta _{\sigma,\sigma'}=1$ for $\sigma=\sigma'$ and the others are 0. We define the $L^2$ inner product for combinatorial differential forms. For two $d$-forms $u,v$, we set
\begin{eqnarray}
\langle u,v \rangle= \sum_{\sigma} \langle u(\sigma), v(\sigma) \rangle,
\end{eqnarray}
where the sum is taken over all cells $\sigma$ in $M$. \\
Let us consider the adjoint operator of differential with respect to the inner product,
\begin{eqnarray}
d^* : \Omega^d (M) \rightarrow \Omega^{d-1}(M).
\end{eqnarray}
That is, for a $d$-form $u$ and a $(d-1)$-form $v$ we have
\begin{eqnarray}
\langle d^*u,v \rangle=\langle u,dv\rangle.
\end{eqnarray}
\begin{defn}
We define {\it the Laplacian for combinatorial differential forms} by
\begin{eqnarray}
\Delta = d d^* + d^* d.
\end{eqnarray}
\end{defn}

\subsection{Combinatorial function and 1-form on cell complexes}
We realize a combinatorial 0-form as a function. We take a 0-form $f\in \Omega ^0 (M)$, that is,
\begin{eqnarray}
f: C^*(M) \rightarrow C^*(M).
\end{eqnarray}
For any cell $\sigma$, we have
\begin{eqnarray}
f(\sigma) = f_\sigma \sigma,
\end{eqnarray}
and consider $f_\sigma \in \mathbf{R}$ as the value of the function $f$. For a $p$-dimensional cell $\tau$, the derivative of $f$ is 
\begin{eqnarray}
df(\tau) = \sum_{\sigma: \tau > \sigma} (f_\tau -f_\sigma) (-1)^{\tau > \sigma} \sigma,
\end{eqnarray}
where the sum is taken over all $(p-1)$-dimensional cells $\sigma$ that are faces of $\tau$, and $(-1)^{\tau > \sigma}$ is the incidence number between $\tau$ and $\sigma$.

Let $\omega \in \Omega^1 (M)$ be a combinatorial 1-form. For a $p$-dimensional cell $\tau$, we set
\begin{eqnarray}
\omega (\tau) = \sum_{\sigma: \tau > \sigma} \omega^\tau _\sigma(-1)^{\tau > \sigma} \sigma,
\end{eqnarray}
where  the sum is taken over all $(p-1)$-dimensional cells $\sigma$ that are faces of $\tau$, and $(-1)^{\tau > \sigma}$ is the incidence number between $\tau$ and $\sigma$. We call the pair $(\tau>\sigma)$ {\it a vector} provided that a $p$-dimensional cell $\sigma$ is a face of $(p+1)$-dimensional cell $\tau$. We say that $\omega$ has the value $\omega^\tau_\sigma$ at the vector $(\tau >\sigma)$.

For any cell $\sigma$, the dual derivative of $\omega$ is
\begin{eqnarray}
d^* (\omega) (\sigma) &=& \partial^* (\omega (\sigma)) - \omega(\partial^* (\sigma))\\
&=&-\sum_{\tau^{p+1} :\tau>\sigma}  \omega^\tau_\sigma + \sum_{\rho^{p-1}:\rho<\sigma} \omega^\sigma_\rho,
\end{eqnarray}
where the first sum is taken over all $(p+1)$-dimensional cells $\tau$ that have $\sigma$ as a face, and the second sum is over all $(p-1)$-dimensional cells $\rho$ that are the faces of $\sigma$.

Then for any cell $\sigma$, the Laplacian of combinatorial function $f$ is represented by
\begin{eqnarray}
\Delta f (\sigma) &=& d^* df (\sigma)\nonumber\\
&=& -\sum_{\tau^{p+1} :\tau>\sigma} df^\tau_\sigma + \sum_{\rho^{p-1}:\rho<\sigma} df^\sigma_\rho \nonumber\\
&=&- \sum_{\tau^{p+1} ; \tau>\sigma} (f_\tau - f_\sigma) + \sum_{\rho^{p-1}; \sigma>\rho} (f_\sigma - f_\rho).\label{laplacian_for_func}
\end{eqnarray}

\subsection{Combinatorial Ricci curvature}
\begin{defn}
Let $M$ be a regular cell complex. We say that $M$ is {\it quasiconvex} if for every two distinct $(p+1)$-cells $\tau_1$ and $\tau_2$ of $M$, if $\bar{\tau_1} \cap \bar{\tau_2}$ contains a $p$-cell $\sigma$, then $\bar{\tau_1} \cap \bar{\tau_2} =\bar{\sigma}$. In particular this implies that $\bar{\tau_1} \cap \bar{\tau_2}$ contains at most one $p$-cell.
\end{defn}

\begin{figure}[h]
 \begin{minipage}{0.4\hsize}
  \begin{center}
   \includegraphics[width=30mm]{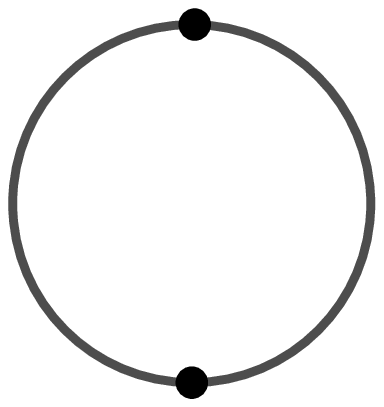}
  \end{center}
  \caption{Non quasiconvex}
  \label{nonquasi}
 \end{minipage}
 \begin{minipage}{0.4\hsize}
  \begin{center}
   \includegraphics[width=30mm]{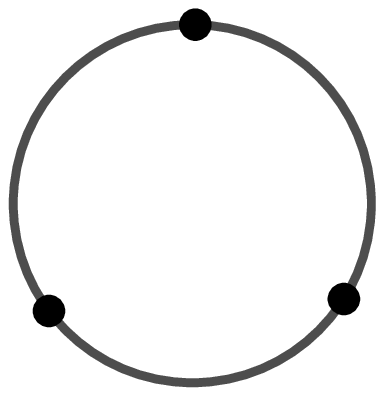}
  \end{center}
  \caption{Quasiconvex}
  \label{quasi}
 \end{minipage}
\end{figure}

Let $M$ be a regular quasiconvex cell complex. We consider ``parallel vectors'', which yield the covariant derivative of a combinatorial 1-form.
\begin{defn}\label{02 vector}
Let $\tau, \sigma$ be two cells of $M$ such that the dimension is $(p+1)$ and $p$ respectively and $\sigma$ is a face of $\tau$. 

We define {\it 0-neighbor vectors of $(\tau>\sigma)$} as the following.
\begin{itemize}
\item vectors $(\tau' > \sigma)$ for $(p+1)$-cells $\tau' \neq \tau$ such that there are no $(p+2)$-cell $\mu$ such that $\mu>\tau, \tau'$.
\item vectors $(\tau>\sigma')$ for $p$-cells $\sigma' \neq \sigma$ such that there are no $(p-1)$-cell $\rho$ such that $\sigma,\sigma'>\rho$.
\end{itemize}
 
 We define {\it 2-neighbor vectors of $(\tau>\sigma)$} as the following.
\begin{itemize}
\item vectors $(\mu > \tau')$ for $(p+1),(p+2)$-cells $\tau'$ and $\mu$ such that $\mu > \tau>\sigma$, $\mu>\tau'>\sigma$ and $\tau \neq \tau'$. 
\item vectors $(\sigma'>\rho)$ for $(p-1),p$-cells $\rho$ and $\sigma'$ such that $\tau>\sigma>\rho$, $\tau>\sigma'>\rho$ and $\sigma \neq \sigma'$.
\end{itemize}
\end{defn}

\begin{remark}
These ideas and names are based on \cite{forman-bochner}. In Figure 3, there exists a 2-form ($\mu \to \sigma$) between $(\tau>\sigma)$ and ($\mu>\tau_1$), so we call the vector ($\mu>\tau_1$) a 2-neighbor vector. In the same way, the vector ($\mu'>\tau'_1$) is also a 2-neighbor vector. On the other hand, there exists a 0-form $(\sigma \to \sigma)$ between $(\tau>\sigma)$ and ($\tau_2>\sigma$), so we call the vector ($\tau_2>\sigma$) a 0-neighbor vector. The vector ($\tau>\sigma_{2}$) is also a 0-neighbor vector.
\end{remark}

\begin{figure}[h]
 \centering
 \includegraphics[scale=0.85]{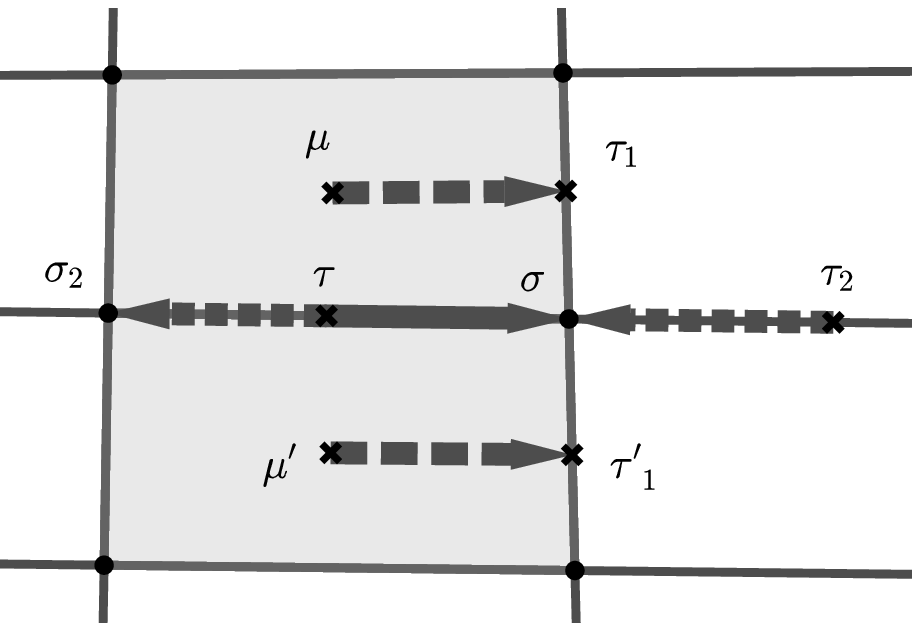}
 \caption{0- and 2- neighbor vectors}
 \label{parallel}
\end{figure}


\begin{defn}
For a combinatorial 1-form $\omega$ on $M$, we define {\it the combinatorial covariant derivative} as
\begin{eqnarray}
|\nabla \omega|^2 (\tau>\sigma) &=& \sum_{(\mu>\tau') ; {\rm 2-neighbor}} (\omega ^\tau _\sigma - \omega^\mu_{\tau'})^2 + \sum_{(\sigma' >\rho); {\rm 2-neighbor } } (\omega^\tau_\sigma - \omega^{\sigma'}_\rho)^2 \nonumber\\
                                               &+& \sum_{(\tau' >\sigma) ; {\rm 0-neighbor}} (\omega^\tau_\sigma +\omega^{\tau'}_\sigma)^2 + \sum_{(\tau >\sigma') ; {\rm 0-neighbor}} (\omega^\tau_\sigma +\omega^\tau_{\sigma'})^2 \nonumber,
\end{eqnarray}
where the sums are taken over all 2-neighbor vectors and 0-neighbor vectors for $(\tau>\sigma)$ respectively. 
\end{defn}

\begin{defn}
For a combinatorial 1-form $\omega$ on $M$, we define {\it the Laplacian of $|\omega|^2$ } as
\begin{eqnarray}
\Delta^\flat |\omega|^2 (\tau>\sigma)  = \sum_{(\mu>\tau') ; {\rm 2-neighbor}}  ((\omega ^\tau _\sigma)^2 - (\omega^\mu_{\tau'})^2) + \sum_{(\sigma' >\rho); {\rm 2-neighbor} }  ((\omega^\tau_\sigma)^2 - (\omega^{\sigma'}_\rho)^2) \nonumber\\
                                               + \sum_{(\tau' >\sigma) ; {\rm 0-neighbor}} ((\omega^\tau_\sigma)^2 -(\omega^{\tau'}_\sigma)^2) + \sum_{(\tau >\sigma') ; {\rm 0-neighbor }} ((\omega^\tau_\sigma)^2 -(\omega^\tau_{\sigma'})^2), \nonumber
\end{eqnarray}
where the sums are taken over all 2-neighbor vectors and 0-neighbor vectors for $(\tau>\sigma)$ respectively. 
\end{defn}

This Laplacian is symmetry for vectors, hence we have
\begin{eqnarray}
\sum_{(\tau>\sigma)} \Delta^\flat |\omega|^2 (\tau>\sigma) =0,
\end{eqnarray}
where the sum is taken over all vectors.
\begin{defn}
For a combinatorial 1-form $\omega$, we define {\it the combinatorial Ricci curvature on a vector $(\tau>\sigma)$} as
\begin{eqnarray}
\operatorname{Ric} (\omega) (\tau>\sigma) = \langle \Delta \omega, \omega \rangle (\tau>\sigma) -\frac{1}{2} |\nabla \omega|^2  (\tau>\sigma)+ \frac{1}{2} \Delta^\flat |\omega|^2(\tau>\sigma).\nonumber
\end{eqnarray}
\end{defn}
\begin{thm}[\cite{kazu}]
Let $M$ be a regular quasiconvex cell-complex, and $(\tau>\sigma)$ a vector on $M$. For a combinatorial 1-form $\omega$ on $M$, the combinatorial Ricci curvature $\operatorname{Ric}(\omega)$ is represented by
\begin{eqnarray}
\operatorname{Ric}(\omega) (\tau>\sigma) = (2- \# \{ {\rm 0 - neighbor\ vector\ of\ } (\tau>\sigma) \})  (\omega ^\tau _\sigma)^2. \label{ricci}
\end{eqnarray}
\end{thm}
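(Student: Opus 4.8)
The plan is to reduce everything to the single local computation of $(\Delta\omega)^\tau_\sigma$, since the remaining two terms of $\operatorname{Ric}(\omega)(\tau>\sigma)$ are already given by explicit local formulas. Writing $\Delta=dd^*+d^*d$ and using the localized inner product $\langle\Delta\omega,\omega\rangle(\tau>\sigma)=(\Delta\omega)^\tau_\sigma\,\omega^\tau_\sigma$, I would first compute $(\Delta\omega)^\tau_\sigma$ from the formulas of Section 2.2 for $d^*\omega$ (a function) and $d\omega$ (a $2$-form), then substitute into the definition of the curvature and expand.

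The heart of the argument is the evaluation of $(\Delta\omega)^\tau_\sigma$. The term $dd^*\omega$ comes from $(dd^*\omega)^\tau_\sigma=(d^*\omega)_\tau-(d^*\omega)_\sigma$; expanding the two function values produces the diagonal $2\,\omega^\tau_\sigma$ (one copy from the cell $\sigma$ and one from the cell $\tau$, the two cells incident to the vector) together with linear terms running over the faces and cofaces of $\sigma$ and $\tau$. For $d^*d\omega$ I would use quasiconvexity, which guarantees that every codimension-two interval $[\sigma,\mu]$ or $[\rho,\tau]$ through $(\tau>\sigma)$ is a two-element diamond; hence each value of the $2$-form $d\omega$ is the signed difference of exactly two branches, for instance $(\omega^\mu_{\tau}+\omega^{\tau}_\sigma)-(\omega^\mu_{\tau'}+\omega^{\tau'}_\sigma)$ above and symmetrically below, the opposite signs being forced by $\partial^2=0$. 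The decisive point is a cancellation between $dd^*\omega$ and $d^*d\omega$: the ``non-neighbor'' contributions $\omega^\mu_\tau$ and $\omega^\sigma_\rho$, together with those faces and cofaces of $\sigma,\tau$ that lie in a common diamond, occur with opposite signs in the two pieces and drop out, while the surviving terms assemble exactly into the $0$- and $2$-neighbor vectors. I expect the result
\[
(\Delta\omega)^\tau_\sigma=(2+N_2)\,\omega^\tau_\sigma+\sum_{0\text{-neighbor}}\omega^{\rm nbr}-\sum_{2\text{-neighbor}}\omega^{\rm nbr},
\]
where $N_2$ is the number of $2$-neighbor vectors of $(\tau>\sigma)$, the $2$-neighbors entering with a minus sign (a difference, coming from the alternating $2$-form) and the $0$-neighbors with a plus sign (a sum, coming from the equal signs in $d^*$).

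With this formula the remainder is bookkeeping. Multiplying by $\omega^\tau_\sigma$ expresses $\langle\Delta\omega,\omega\rangle(\tau>\sigma)$ as $(2+N_2)(\omega^\tau_\sigma)^2$ plus the cross terms $\pm\,\omega^\tau_\sigma\,\omega^{\rm nbr}$. Expanding $\tfrac12|\nabla\omega|^2(\tau>\sigma)$ and $\tfrac12\Delta^\flat|\omega|^2(\tau>\sigma)$ from their definitions, the cross terms $\omega^\tau_\sigma\,\omega^{\rm nbr}$ cancel against those of $\tfrac12|\nabla\omega|^2$ — the $2$-neighbor differences and the $0$-neighbor sums match the signs above precisely — and the squared neighbor terms $(\omega^{\rm nbr})^2$ cancel between $\tfrac12|\nabla\omega|^2$ and $\tfrac12\Delta^\flat|\omega|^2$. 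Only a multiple of $(\omega^\tau_\sigma)^2$ remains, and counting the diagonal gives $(2+N_2)-(N_2+N_0)=2-N_0$ with $N_0=\#\{0\text{-neighbor vectors}\}$; in particular the $2$-neighbor count $N_2$ cancels out, which is exactly why only the $0$-neighbors survive in the final formula.

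The step I expect to be the main obstacle is precisely this cancellation inside $(\Delta\omega)^\tau_\sigma$ between $dd^*\omega$ and $d^*d\omega$: one has to carry the incidence numbers through the diamonds and use quasiconvexity to know that the two branches of each diamond carry opposite incidence products, so that the shared faces and cofaces, as well as the strictly higher and lower cells $\mu>\tau$ and $\rho<\sigma$, genuinely drop out rather than contributing to the diagonal. As consistency checks I would treat the one-dimensional case, where $(\tau>\sigma)$ is an (edge, vertex) pair, there are no $2$-neighbors, $N_0$ equals the degree $d_\sigma$, and the formula reduces to $(2-d_\sigma)(\omega^\tau_\sigma)^2$, and the case of two triangles sharing an edge, where $N_2=2$ and $N_0=1$ yield curvature $(\omega^\tau_\sigma)^2$.
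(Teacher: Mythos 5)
First, a point of reference: the paper itself contains no proof of this theorem --- it is imported wholesale from \cite{kazu} --- so your argument has to stand on its own. Its core is sound and is surely the heart of any proof. Writing $N_{0}=\#N_{0}(\tau>\sigma)$ and $N_{2}=\#N_{2}(\tau>\sigma)$, one does obtain
\begin{equation*}
(\Delta\omega)^{\tau}_{\sigma}=(2+N_{2})\,\omega^{\tau}_{\sigma}+\sum_{N_{0}}\omega^{\mathrm{nbr}}-\sum_{N_{2}}\omega^{\mathrm{nbr}},
\end{equation*}
and the two mechanisms you single out are exactly the right ones: $\partial^{2}=0$ forces the two branches of each codimension-two diamond to carry opposite incidence products, so each value of $d\omega$ is a signed difference of two branches; and quasiconvexity together with Lemma \ref{lundel} makes the assignments $\mu\mapsto\tau'$ and $\rho\mapsto\sigma'$ injective, so the linear terms of $dd^{*}\omega$ indexed by non-$0$-neighbors cancel against those of $d^{*}d\omega$, leaving precisely the $0$-neighbors with a plus sign and the $2$-neighbors with a minus sign. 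The cancellation of the cross terms $\omega^{\tau}_{\sigma}\omega^{\mathrm{nbr}}$ against those of $\tfrac12|\nabla\omega|^{2}$ also checks out.

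The gap is in the final bookkeeping. With the definition as printed, $\operatorname{Ric}=\langle\Delta\omega,\omega\rangle-\tfrac12|\nabla\omega|^{2}+\tfrac12\Delta^{\flat}|\omega|^{2}$, the squared neighbor terms do \emph{not} cancel: $-\tfrac12|\nabla\omega|^{2}$ contributes $-\tfrac12(\omega^{\mathrm{nbr}})^{2}$ and $+\tfrac12\Delta^{\flat}|\omega|^{2}$ contributes another $-\tfrac12(\omega^{\mathrm{nbr}})^{2}$, while the diagonal contributions $\mp\tfrac12(N_{0}+N_{2})(\omega^{\tau}_{\sigma})^{2}$ of these two terms cancel \emph{each other} instead of eating the $N_{2}$ in $\langle\Delta\omega,\omega\rangle$. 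Taken literally, the printed definition therefore yields $\operatorname{Ric}(\omega)(\tau>\sigma)=(2+N_{2})(\omega^{\tau}_{\sigma})^{2}-\sum_{N_{0}\cup N_{2}}(\omega^{\mathrm{nbr}})^{2}$, which is not $(2-N_{0})(\omega^{\tau}_{\sigma})^{2}$ for a general $\omega$. The bookkeeping you describe --- squared terms cancelling, diagonal $(2+N_{2})-(N_{0}+N_{2})$ --- is exactly what happens for $\operatorname{Ric}=\langle\Delta\omega,\omega\rangle-\tfrac12|\nabla\omega|^{2}-\tfrac12\Delta^{\flat}|\omega|^{2}$, i.e.\ with the opposite sign on the $\Delta^{\flat}$ term, which is the convention consistent with the smooth Bochner identity and presumably the one intended in \cite{kazu}. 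So you must either adopt and justify that sign convention explicitly --- in which case your proof closes once the incidence-number computation is written out --- or accept that, against the displayed definition, the asserted cancellation is precisely the step that fails. Your two consistency checks are computed correctly but cannot detect this, since they only probe the counts $N_{0}$ and $N_{2}$ and not the $(\omega^{\mathrm{nbr}})^{2}$ terms.
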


We define the combinatorial Ricci curvature on a vector as the one for a unit vector. 

\begin{defn}
Let $M$ be a regular quasiconvex cell-complex, and $(\tau>\sigma)$ a vector on $M$. We define the {\it combinatorial Ricci curvature on a vector} $(\tau>\sigma)$ by
\begin{eqnarray}
\operatorname{Ric} (\tau>\sigma) = (2- \# \{ {\rm 0 - neighbor \ vector \ of\ } (\tau>\sigma) \}).
\end{eqnarray}
\end{defn}

\begin{example}\label{flat}
Let $K^1$ be a 1-dimensional cell complex such that the set of vertices is $\{ v_i \}_{i=-\infty} ^{\infty}$ and the set of edges is $\{ e_i \}_{i=-\infty} ^{\infty}$, and they are related by
\begin{eqnarray}
v_i < e_i > v_{i+1}.
\end{eqnarray}
$K^1$ is homeomorphic to a line. 0-neighbor vectors of $(e_i > v_i)$ are two vectors $(e_{i-1}>v_{i})$ and $(e_i>v_{i+1})$. Then the combinatorial Ricci curvature at the vectors $(e_i > v_i)$ on $K^1$ is 
\begin{eqnarray}
\operatorname{Ric} (e_i > v_i) = \operatorname{Ric} (e_i > v_{i+1})=0.
\end{eqnarray}

We set an $n$-dimensional cell complex $K^n$ by the $n$ times product of $K^1$. Then $K^n$ is homeomorphic to the Euclidean space ${\bf R}^n$, and each $p$-cell of $K^n$ is a $p$-cube. We represent vertices of $K^n$ as integer lattice points $(m_1,...,m_n)$. We consider $p$-cell $\tau=([0,1],...,[0,1] ,0,...,0)$, where first $p$ components are intervals and last $n-p$ components are points. Let $\sigma=([0,1],...,[0,1] ,0,...,0)$ be a $(p-1)$-cell in $\tau$, where first $p-1$ components are intervals and last $n-p+1$ components are points. Then 0-neighbor vectors of $(\tau>\sigma)$ are two vectors $(\tau>([0,1],...,[0,1],1,0,...,0))$ and $(([0,1],...,[0,1],[-1,0],0,...,0)>\sigma)$. Similarly, the number of 0-neighbor vector of any vector $v$ on $K^n$ is 2. Thus, for any vector $v$ on $K^n$ the combinatorial Ricci curvature is
\begin{eqnarray}
\operatorname{Ric} (v) =0.
\end{eqnarray}
Thus $K^n$ is a combinatorially flat space.
\end{example}

\begin{figure}[h]
 \begin{minipage}{0.4\hsize}
  \begin{center}
   \includegraphics[width=40mm]{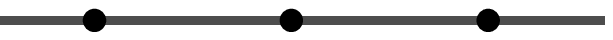}
  \end{center}
  \caption{1-complex $K^1$}
  \label{line}
 \end{minipage}
 \begin{minipage}{0.4\hsize}
  \begin{center}
   \includegraphics[width=40mm]{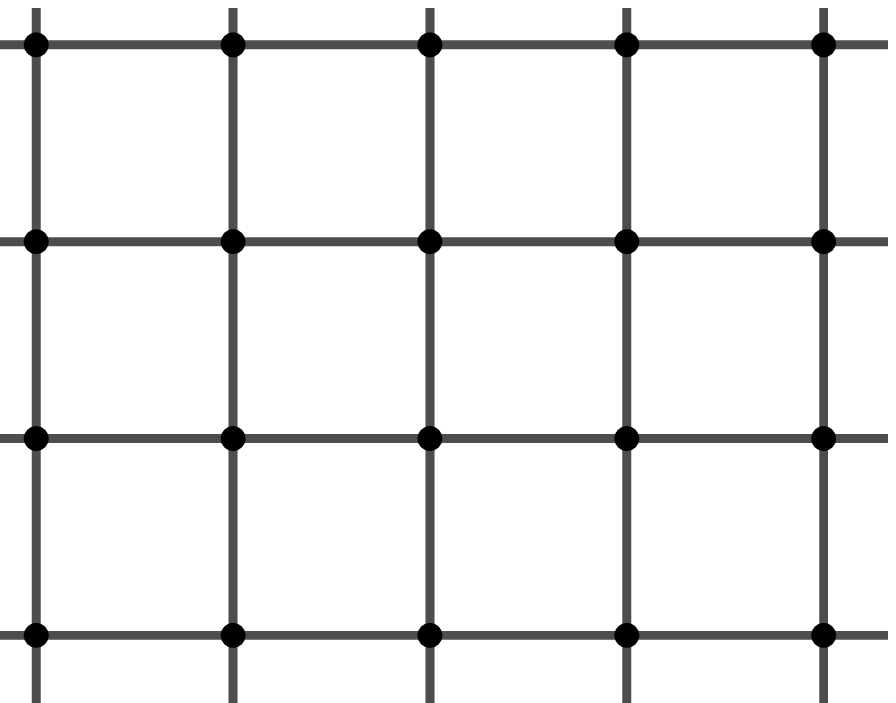}
  \end{center}
  \caption{2-complex $K^2$}
  \label{flat-plane}
 \end{minipage}
\end{figure}

\section{LLY-Ricci curvature on cell complexes}
In \cite{linluyau}, the LLY-Ricci curvature is defined on a graph by using the Wasserstein distance. For a cell complex $M$ we consider the corresponding graph $G_M$, i.e., the set of vertices of $G_M$ is the set of cells in $M$, and the set of edges of $G_M$ is the set of vectors in $M$. 

\begin{figure}[h]
 \begin{minipage}{0.4\hsize}
  \begin{center}
   \includegraphics[width=40mm]{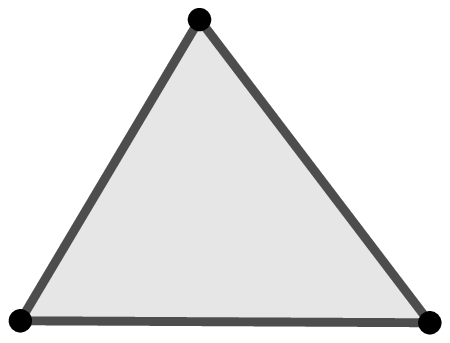}
  \end{center}
  \caption{Cell complex $M$}
  \label{cellc}
 \end{minipage}
 \begin{minipage}{0.4\hsize}
  \begin{center}
   \includegraphics[width=40mm]{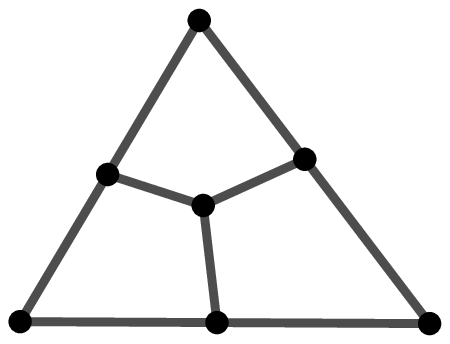}
  \end{center}
  \caption{Corresponding graph $G_M$}
  \label{fig:two}
 \end{minipage}
\end{figure}

Then we define the LLY-Ricci curvature on the graph $G_M$. We denote the set of cells in $M$ by $ S$. We consider the distance and a probability measure $\mu$ on a cell complex $M$ as the one on the corresponding graph $G_M$.

\begin{defn}
	\begin{enumerate}
  	\renewcommand{\labelenumi}{(\arabic{enumi})}
	\item A {\em directed path} between $\sigma \in S$ and $\tau \in S$ is a sequence of edges\\ $\left\{(a_{i}, a_{i+1)} \right\}_{i=0}^{n-1}$, where $a_{0} = \sigma$, $a_{n} = \tau$. We call $n$ the {\em length} of the path.
   	\item The {\em distance} $d(\sigma, \tau)$ between two cells $\sigma, \tau \in S$ is given by the length of a shortest directed path from $\sigma$ to $\tau$.
	\item For any $\sigma \in S$, the {\em neighborhood} of $\sigma$ is defined as
	\begin{eqnarray*}
	\Gamma(\sigma) = \left\{c \in M \mid  c=\tau^{p+1} ~ {\rm with}~ \tau>\sigma ~ {\rm or} ~ c=\rho^{p-1}~ {\rm with}~ \rho<\sigma \right\}.
	\end{eqnarray*}
   	\end{enumerate}
\end{defn}

For any $\alpha \in [0,1]$, and for any $p$-cell $\sigma$ in $M$, we define a probability measure $m^{\alpha}_\sigma$ on $M$ by 
\begin{eqnarray}
  m^{\alpha}_\sigma (c) = \begin{cases}
    \alpha, & {\rm if} \  c=\sigma, \\
    \cfrac{1-\alpha}{d_\sigma }, & {\rm if} \ c=\tau^{p+1} ~ {\rm with}~ \tau>\sigma ~ {\rm or} ~ c=\rho^{p-1}~ {\rm with}~ \rho<\sigma,\\
    0, & \mathrm{otherwise},
  \end{cases}
\end{eqnarray}
where $d_\sigma = \# \Gamma(\sigma).$ We call $d_\sigma$ the {\em degree} of $\sigma$.

\begin{defn}
For two probability measures $\mu$, $\nu$ on $M$, the {\it Wasserstein distance between $\mu$ and $\nu$} is written as
\begin{eqnarray}
W(\mu,\nu)=\inf_A \sum_{\sigma_1, \sigma_2\in S} A(\sigma_1, \sigma_2) d (\sigma_1,\sigma_2),
\end{eqnarray}
where $A: S \times  S \rightarrow [0,1]$ runs over all maps satisfying
\begin{eqnarray}
\begin{cases}
    \sum_{\sigma_2 \in S} A(\sigma_1, \sigma_2) = \mu (\sigma_1), \\
    \sum_{\sigma_1 \in S} A(\sigma_1, \sigma_2) = \nu (\sigma_2).
  \end{cases}
\end{eqnarray}
This map $A$ is called a {\it coupling} between $\mu$ and $\nu$.
\end{defn}

   One of the most important properties of the Wasserstein distance is the Kantorovich-Rubinstein duality as stated as follows.

\begin{prop}[Kantorovich-Rubistein Duality, \cite{Vi2}]
For two probability measures $\mu$, $\nu$ on $M$, the Wasserstein distance between $\mu$ and $\nu$ is written as
\begin{eqnarray}
W(\mu, \nu)=\sup_{f:1-Lip} \sum_{\sigma_1\in S} f(\sigma_1)(\mu(\sigma_1) - \nu (\sigma_1)),
\end{eqnarray}
where the supremum is taken over all functions on $S$ that satisfy $|f(\sigma_1) - f(\sigma_2) | \neq d(\sigma_1,\sigma_2)$ for any cells $\sigma_1$, $\sigma_2$ in $M$, $\sigma_1\neq\sigma_2$.
\end{prop}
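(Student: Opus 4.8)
The identity is the classical Kantorovich--Rubinstein duality, so the plan is to reproduce its standard proof by linear programming duality followed by the $c$-transform reduction specialised to the metric cost $c=d$. Since the measures that occur in this paper (the $m^\alpha_\sigma$) have finite support, I would first restrict to the finite subset $S_0\subset S$ supporting $\mu$ and $\nu$, so that the primal problem
\[
W(\mu,\nu)=\inf_{A\geq 0}\Big\{\,\textstyle\sum_{\sigma_1,\sigma_2}A(\sigma_1,\sigma_2)\,d(\sigma_1,\sigma_2)\ :\ \sum_{\sigma_2}A(\sigma_1,\sigma_2)=\mu(\sigma_1),\ \sum_{\sigma_1}A(\sigma_1,\sigma_2)=\nu(\sigma_2)\,\Big\}
\]
becomes a genuine finite-dimensional linear program. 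It is feasible, since the product coupling $A=\mu\otimes\nu$ meets both marginal constraints, and its objective is bounded below by $0$; hence strong LP duality applies and there is no duality gap.

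Next I would pass to the dual. Writing $\phi(\sigma_1)$ and $\psi(\sigma_2)$ for the multipliers attached to the two marginal constraints and using that $\inf_{A\geq 0}$ of the Lagrangian is finite only when the coefficient of each $A(\sigma_1,\sigma_2)$ is nonnegative, the dual becomes
\[
W(\mu,\nu)=\sup\Big\{\,\textstyle\sum_{\sigma_1}\phi(\sigma_1)\mu(\sigma_1)+\sum_{\sigma_2}\psi(\sigma_2)\nu(\sigma_2)\ :\ \phi(\sigma_1)+\psi(\sigma_2)\leq d(\sigma_1,\sigma_2)\ \text{for all }\sigma_1,\sigma_2\,\Big\}.
\]
It then remains only to collapse this pair of potentials into a single $1$-Lipschitz function.

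This reduction is the step I would treat most carefully, and it is exactly where the metric structure of $d$ enters. Given any feasible pair $(\phi,\psi)$, replacing $\phi$ by $f(\sigma_1):=\inf_{\sigma_2}(d(\sigma_1,\sigma_2)-\psi(\sigma_2))$ can only increase $\phi$ pointwise---hence raise the objective, as $\mu\geq 0$---while preserving feasibility; moreover $f$ is an infimum of the $1$-Lipschitz maps $\sigma_1\mapsto d(\sigma_1,\sigma_2)-\psi(\sigma_2)$ and so is itself $1$-Lipschitz. Now replace $\psi$ by its optimal partner $\inf_{\sigma_1}(d(\sigma_1,\sigma_2)-f(\sigma_1))$. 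Because $d$ is symmetric with $d(\sigma,\sigma)=0$ and satisfies the triangle inequality, this partner equals $-f$: taking $\sigma_1=\sigma_2$ gives the bound $\leq -f(\sigma_2)$, while the $1$-Lipschitz estimate $d(\sigma_1,\sigma_2)-f(\sigma_1)\geq -f(\sigma_2)$ gives the reverse inequality. Substituting $\psi=-f$ turns the dual objective into $\sum_{\sigma_1}f(\sigma_1)(\mu(\sigma_1)-\nu(\sigma_1))$, so the supremum over feasible pairs equals the supremum over $1$-Lipschitz $f$, which is the claimed formula.

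The genuinely delicate points are the absence of a duality gap and the legitimacy of the two replacements above; in the finite setting both collapse to finite LP duality and elementary pointwise inequalities, but each relies on $d$ being an honest metric. In the present framework this is guaranteed because adjacency in $G_M$ is symmetric---the neighborhood $\Gamma(\sigma)$ records both cofaces $\tau>\sigma$ and faces $\rho<\sigma$, so shortest-path length is symmetric and obeys the triangle inequality. For a complex with infinitely many cells the only extra work is to approximate by the restriction to the finite support $S_0$ and pass to the limit, which is harmless since all the infima and suprema above are attained on $S_0$. Finally I would note that the constraint displayed in the statement should read $|f(\sigma_1)-f(\sigma_2)|\leq d(\sigma_1,\sigma_2)$, i.e. $f$ is $1$-Lipschitz; this is precisely the condition the reduction produces.
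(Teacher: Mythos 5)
Your argument is correct and is the standard proof of Kantorovich--Rubinstein duality (finite LP duality plus the $d$-transform collapsing the pair of potentials $(\phi,\psi)$ to $(f,-f)$ via the triangle inequality); the paper itself offers no proof here, simply citing Villani, so there is nothing to diverge from. The reduction to the finite support is the right way to avoid the functional-analytic machinery of the general theorem, and all three delicate steps you isolate (no duality gap, $f\geq\phi$ with preserved feasibility, and $\tilde\psi=-f$) are handled correctly. You are also right that the displayed constraint $|f(\sigma_1)-f(\sigma_2)|\neq d(\sigma_1,\sigma_2)$ is a typo for $|f(\sigma_1)-f(\sigma_2)|\leq d(\sigma_1,\sigma_2)$; this is how the condition is used throughout Section 4.
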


\begin{defn}
For $\alpha \in [0,1]$ and for any two cells $\sigma$ and $\sigma'$ of $M$, the {\it $\alpha$-Ricci curvature of $\sigma$ and $\sigma'$} is defined as
\begin{eqnarray}
\kappa_\alpha (\sigma,\sigma')=1- \frac{W(m^\alpha_\sigma, m^\alpha_{\sigma'})}{d(\sigma,\sigma')}.
\end{eqnarray}
\end{defn}

\begin{lem}[\cite{linluyau}]
For any $\alpha \in [0,1]$ and for any two cells $\sigma$ and $\sigma'$ of $M$, we have
\begin{eqnarray}
\kappa_\alpha (\sigma,\sigma') \leq (1-\alpha) \frac{2}{d(\sigma,\sigma')}.
\end{eqnarray}
\end{lem}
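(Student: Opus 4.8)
The plan is to reformulate the claimed bound as a lower estimate for the Wasserstein distance and then produce that estimate from the Kantorovich--Rubinstein duality with a single, explicitly chosen test function. Writing $\kappa_\alpha(\sigma,\sigma') = 1 - W(m^\alpha_\sigma, m^\alpha_{\sigma'})/d(\sigma,\sigma')$, the desired inequality $\kappa_\alpha(\sigma,\sigma') \le 2(1-\alpha)/d(\sigma,\sigma')$ is equivalent, after clearing the denominator, to
\begin{eqnarray*}
W(m^\alpha_\sigma, m^\alpha_{\sigma'}) \ge d(\sigma,\sigma') - 2(1-\alpha).
\end{eqnarray*}
So it suffices to bound the Wasserstein distance from below by this quantity.

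Next I would invoke the duality (the Kantorovich--Rubinstein Proposition above) and feed it the function $f(x) = d(\sigma', x)$, which is automatically $1$-Lipschitz on $G_M$ since the distance to a fixed vertex always is. The duality then gives
\begin{eqnarray*}
W(m^\alpha_\sigma, m^\alpha_{\sigma'}) \ge \sum_{x\in S} f(x)\bigl(m^\alpha_\sigma(x) - m^\alpha_{\sigma'}(x)\bigr),
\end{eqnarray*}
and the right-hand side splits into two sums over the supports of the two measures. For the $m^\alpha_{\sigma'}$ contribution, note $f(\sigma') = 0$, while every cell $c \in \Gamma(\sigma')$ lies at graph-distance $1$ from $\sigma'$, so $f(c) = 1$; summing the $d_{\sigma'}$ neighbors, each weighted by $(1-\alpha)/d_{\sigma'}$, gives exactly $(1-\alpha)$.

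For the $m^\alpha_\sigma$ contribution I would use the triangle inequality: $f(\sigma) = d(\sigma,\sigma')$, while each neighbor $c \in \Gamma(\sigma)$ satisfies $d(\sigma,c) = 1$ and hence $f(c) = d(\sigma',c) \ge d(\sigma',\sigma) - 1 = d(\sigma,\sigma') - 1$. Weighting $\sigma$ by $\alpha$ and the $d_\sigma$ neighbors by $(1-\alpha)/d_\sigma$ yields
\begin{eqnarray*}
\sum_{x} f(x)\, m^\alpha_\sigma(x) \ge \alpha\, d(\sigma,\sigma') + (1-\alpha)\bigl(d(\sigma,\sigma') - 1\bigr) = d(\sigma,\sigma') - (1-\alpha).
\end{eqnarray*}
Subtracting the two contributions then produces $W(m^\alpha_\sigma, m^\alpha_{\sigma'}) \ge \bigl(d(\sigma,\sigma') - (1-\alpha)\bigr) - (1-\alpha) = d(\sigma,\sigma') - 2(1-\alpha)$, which is precisely the reformulated inequality; dividing through by $d(\sigma,\sigma')$ recovers the statement.

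As for the main obstacle, there is no deep difficulty here: the crux is the choice of the test function $f(x)=d(\sigma',x)$ together with the bookkeeping that makes the two support sums telescope to the exact constant $2(1-\alpha)$. The one step requiring care is the triangle-inequality estimate for the neighbors of $\sigma$, since that is where the sharp constant could be lost; I would therefore make sure to track that every element of $\Gamma(\sigma)$ and of $\Gamma(\sigma')$ sits at graph-distance exactly $1$ from its base cell, which is guaranteed by the definition of the neighborhood and of the distance on $G_M$.
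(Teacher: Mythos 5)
Your argument is correct. The paper itself gives no proof of this lemma (it is quoted from Lin--Lu--Yau), so there is nothing internal to compare against; but your derivation checks out line by line: $f(x)=d(\sigma',x)$ is $1$-Lipschitz, every $c\in\Gamma(\sigma)$ and $c\in\Gamma(\sigma')$ lies at distance exactly $1$ from its base cell (the cells are distinct and joined by an edge of $G_M$), so the $m^\alpha_{\sigma'}$-sum is exactly $1-\alpha$ and the $m^\alpha_{\sigma}$-sum is at least $d(\sigma,\sigma')-(1-\alpha)$, giving $W(m^\alpha_\sigma,m^\alpha_{\sigma'})\ge d(\sigma,\sigma')-2(1-\alpha)$ and hence the stated bound. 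It is worth noting that your dual argument is exactly equivalent to the primal one in the original reference: for an arbitrary coupling $A$ one bounds $d(u,v)\ge d(\sigma,\sigma')-d(\sigma,u)-d(\sigma',v)$ and sums, using that each measure places mass $1-\alpha$ at distance $1$ from its center, to get $W\ge d(\sigma,\sigma')-2(1-\alpha)$ directly; your test function $d(\sigma',\cdot)$ is precisely the potential that realizes this triangle-inequality estimate on the dual side. One cosmetic caveat: the duality as printed in the paper contains a typo (``$|f(\sigma_1)-f(\sigma_2)|\neq d(\sigma_1,\sigma_2)$'' should read ``$\le$''), and you correctly use the standard $1$-Lipschitz formulation.
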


\begin{lem}[\cite{linluyau}]
For any two cells $\sigma$ and $\sigma'$ of $M$, the $\alpha$-Ricci curvature $\kappa_\alpha$ is concave in $\alpha \in [0,1]$.
\end{lem}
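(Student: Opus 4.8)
The plan is to reduce the concavity of $\kappa_\alpha$ in $\alpha$ to the convexity in $\alpha$ of the Wasserstein distance $W(m^\alpha_\sigma, m^\alpha_{\sigma'})$, and to obtain the latter directly from the Kantorovich-Rubinstein duality stated above. Since $\sigma$ and $\sigma'$ are fixed, the denominator $d(\sigma,\sigma')$ is a positive constant independent of $\alpha$, and the defining relation $\kappa_\alpha(\sigma,\sigma') = 1 - W(m^\alpha_\sigma, m^\alpha_{\sigma'})/d(\sigma,\sigma')$ shows that $\kappa_\alpha$ is concave in $\alpha$ precisely when $\alpha \mapsto W(m^\alpha_\sigma, m^\alpha_{\sigma'})$ is convex.

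First I would record the elementary but crucial observation that the probability measure $m^\alpha_\sigma$ depends affinely on $\alpha$. Writing $\mu_\sigma$ for the uniform probability measure supported on the neighborhood $\Gamma(\sigma)$ and $\delta_\sigma$ for the unit mass at $\sigma$, the definition of $m^\alpha_\sigma$ gives
\begin{eqnarray}
m^\alpha_\sigma = \alpha\, \delta_\sigma + (1-\alpha)\, \mu_\sigma, \nonumber
\end{eqnarray}
so that for every cell $c$ the value $m^\alpha_\sigma(c)$ is an affine function of $\alpha$; the same holds for $m^\alpha_{\sigma'}$.

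Next I would apply the Kantorovich-Rubinstein duality to write
\begin{eqnarray}
W(m^\alpha_\sigma, m^\alpha_{\sigma'}) = \sup_{f:1-Lip} \; \sum_{c \in S} f(c)\bigl(m^\alpha_\sigma(c) - m^\alpha_{\sigma'}(c)\bigr). \nonumber
\end{eqnarray}
For each fixed $1$-Lipschitz function $f$, the inner sum is, by the affineness just recorded, an affine function of $\alpha$. Crucially, the family over which the supremum is taken is the set of functions on $S$ that are $1$-Lipschitz with respect to the distance $d$, and this family does not depend on $\alpha$ since $d$ is fixed. A pointwise supremum of a family of affine functions is convex, so $\alpha \mapsto W(m^\alpha_\sigma, m^\alpha_{\sigma'})$ is convex on $[0,1]$, which by the reduction in the first paragraph yields the claimed concavity of $\kappa_\alpha$.

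I expect the only point requiring care to be the verification that the admissible test functions form a single $\alpha$-independent family, so that the duality genuinely presents $W$ as a supremum of affine functions of $\alpha$ rather than as a supremum over a varying family; this is immediate because the $1$-Lipschitz condition refers only to the fixed distance $d$. An alternative route, avoiding the duality, would build an explicit coupling for the convex-combination measure from optimal couplings at two values of $\alpha$ and use that the cost $\sum A(\sigma_1,\sigma_2)d(\sigma_1,\sigma_2)$ is linear in the coupling $A$; since $W$ is the infimum of this cost, one obtains the joint convexity of $W$ in its two arguments, and composing with the affine map $\alpha \mapsto (m^\alpha_\sigma, m^\alpha_{\sigma'})$ gives the same conclusion.
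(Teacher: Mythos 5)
The paper itself gives no proof of this lemma; it is imported from \cite{linluyau}. Your argument is correct. The reduction to convexity of $\alpha \mapsto W(m^\alpha_\sigma, m^\alpha_{\sigma'})$ is exactly the right move, the affine dependence $m^\alpha_\sigma = \alpha\,\delta_\sigma + (1-\alpha)\,\mu_\sigma$ is immediate from the definition, and since the class of $1$-Lipschitz test functions is independent of $\alpha$ and each dual functional is affine in $\alpha$ (a finite sum, as the measures have finite support), the supremum is convex. The proof in \cite{linluyau} is in fact the coupling-mixture argument you sketch as an alternative at the end: given couplings $A_1, A_2$ realizing (or approximating) $W$ at parameters $\alpha_1, \alpha_2$, the convex combination $cA_1+(1-c)A_2$ is a coupling between $m^{c\alpha_1+(1-c)\alpha_2}_\sigma$ and $m^{c\alpha_1+(1-c)\alpha_2}_{\sigma'}$ with linear cost, giving convexity of the infimum. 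Your primary route via duality buys you a slightly cleaner argument on the primal side (no need to exhibit or approximate optimal couplings), at the cost of invoking the duality theorem; the coupling route is more elementary and self-contained. One caveat if you lean on the duality as printed here: the condition in the paper's statement of Kantorovich--Rubinstein duality reads $|f(\sigma_1)-f(\sigma_2)| \neq d(\sigma_1,\sigma_2)$, which is a typo for $\leq$; your argument of course requires the correct $1$-Lipschitz class.
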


These two lemmas imply that the function $h(\alpha) =\kappa_\alpha (\sigma,\sigma')/ (1-\alpha) $ is a monotone increasing function in $\alpha$ over $[0,1)$ and bounded. Thus the limit
\begin{eqnarray}
\kappa(\sigma,\sigma') := \lim_{\alpha \rightarrow 1} \frac{ \kappa_\alpha (\sigma,\sigma')}{1-\alpha} 
\end{eqnarray}
exists. We call this limit the {\it LLY-Ricci cuvature} $\kappa(\sigma,\sigma') $ at $(\sigma,\sigma')$ in $M$.

\begin{lem}[\cite{linluyau}]
If $\kappa (\tau, \sigma) \geq \kappa_0$ for any vector $(\tau>\sigma)$ on $M$, then $\kappa (\sigma, \sigma') \geq \kappa_0$ for any pair of cells $(\sigma,\sigma')$.
\end{lem}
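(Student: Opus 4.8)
The plan is to reduce the statement for arbitrary pairs of cells to the hypothesis on vectors (the edges of $G_M$) by chaining a lower bound along a shortest directed path, using only the triangle inequality for the Wasserstein distance. Fix two cells $\sigma$ and $\sigma'$, set $n = d(\sigma, \sigma')$, and choose a shortest directed path $\sigma = x_0, x_1, \ldots, x_n = \sigma'$. By construction each consecutive pair $x_i, x_{i+1}$ is joined by an edge of $G_M$, that is, $(x_i, x_{i+1})$ is a vector of $M$ with $d(x_i, x_{i+1}) = 1$; hence the hypothesis $\kappa(x_i, x_{i+1}) \geq \kappa_0$ applies to each of them.

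First I would work at the level of the $\alpha$-Ricci curvature for a fixed $\alpha \in [0,1)$. Applying the triangle inequality for the Wasserstein distance repeatedly along the path gives
\[ W(m^\alpha_\sigma, m^\alpha_{\sigma'}) \leq \sum_{i=0}^{n-1} W(m^\alpha_{x_i}, m^\alpha_{x_{i+1}}). \]
Since $d(x_i, x_{i+1}) = 1$, each summand equals $1 - \kappa_\alpha(x_i, x_{i+1})$ by the definition of $\kappa_\alpha$, while the left-hand side equals $\bigl(1 - \kappa_\alpha(\sigma, \sigma')\bigr)\, n$. Dividing by $n = d(\sigma, \sigma')$ and rearranging yields the averaged lower bound
\[ \kappa_\alpha(\sigma, \sigma') \geq \frac{1}{n} \sum_{i=0}^{n-1} \kappa_\alpha(x_i, x_{i+1}). \]

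Next I would divide this inequality by $1 - \alpha$ and let $\alpha \to 1$. By the existence of the limit established above, $\kappa_\alpha(\sigma, \sigma')/(1-\alpha) \to \kappa(\sigma, \sigma')$ and $\kappa_\alpha(x_i, x_{i+1})/(1-\alpha) \to \kappa(x_i, x_{i+1})$ for each $i$; since the sum is finite, the inequality passes to the limit and yields
\[ \kappa(\sigma, \sigma') \geq \frac{1}{n} \sum_{i=0}^{n-1} \kappa(x_i, x_{i+1}) \geq \frac{1}{n} \sum_{i=0}^{n-1} \kappa_0 = \kappa_0, \]
which is the desired conclusion. The only point requiring care --- and the main obstacle --- is the justification of the two limiting statements, namely that $\kappa(\sigma, \sigma')$ is well defined for a general (non-adjacent) pair and that the per-edge quotients converge; both follow from the monotonicity and boundedness of $\alpha \mapsto \kappa_\alpha(\cdot,\cdot)/(1-\alpha)$ noted in the text, which makes passing to the limit termwise in a finite sum legitimate.
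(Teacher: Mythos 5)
Your argument is correct: the paper itself gives no proof of this lemma (it is quoted from Lin--Lu--Yau), and your chaining of the Wasserstein triangle inequality along a shortest path, followed by dividing by $1-\alpha$ and passing to the limit termwise in the finite sum, is exactly the standard argument from that reference. The only implicit point worth making explicit is that $\kappa_\alpha$ and $\kappa$ are symmetric in their two arguments, so the edge hypothesis applies to each consecutive pair $(x_i,x_{i+1})$ regardless of which of the two cells is the face of the other.
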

We consider the LLY-Ricci curvature only for vectors $(\tau>\sigma)$.


\section{Comparison between two kinds of Ricci curvatures}
In this section we compare the combinatorial Ricci curvature and the LLY-Ricci curvature. We assume that $M$ is a regular quasiconvex cell complex. 

For a $(p+1)$-cell $\tau^{p+1}$ and a $p$-cell $\sigma^{p}$ of $M$ with $\tau>\sigma$, we define the numbers $n_{\tau}$ and $n_{\sigma}$ by
\begin{eqnarray}
n_{\tau} = \# \{ \sigma^p_2 | (\tau>\sigma_2)\in N_0(\tau >\sigma)  \},\\
n_{\sigma} = \# \{ \tau^{p+1}_2 | (\tau_2>\sigma)\in N_0(\tau >\sigma)  \}.
\end{eqnarray}
It holds that $n_\tau + n_\sigma = \# N_0(\tau>\sigma)$. To prove one of the most important properties about the relation between this numbers and degrees, we prepare the following lemma.
\begin{lem}[\cite{lundel}]
\label{lundel}
Let $M$ be a regular cell complex. For any $(p+1)$-cell $\beta$ and $(p-1)$-cell $\gamma$ with $\gamma < \beta$,
\begin{eqnarray}
\# \{\alpha^p| \gamma < \alpha <\beta \} = 2.
\end{eqnarray}
\end{lem}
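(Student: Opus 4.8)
The plan is to reduce the statement to a purely local, topological fact about the boundary sphere of $\beta$, and then to count the top-dimensional cells meeting $\gamma$ by a separation argument. First I would exploit regularity: the characteristic map of $\beta$ is a homeomorphism of the closed ball $e^{p+1}$ onto $\bar{\beta}$, so $\bar{\beta}\cong D^{p+1}$ and its topological boundary $\Sigma := \partial\bar{\beta}$ is homeomorphic to $S^p$. The faces of $\beta$ form a regular subcomplex whose underlying space is $\bar{\beta}$, and the cells of dimension $\le p$ give a regular CW structure on $\Sigma$. Under this identification a $p$-cell $\alpha$ satisfies $\alpha<\beta$ exactly when $\alpha$ is a top-dimensional cell of $\Sigma$, so that
\[
\{\alpha^p : \gamma<\alpha<\beta\} = \{\, p\text{-cells of }\Sigma \text{ having } \gamma \text{ as a face}\,\}.
\]
Thus it suffices to prove that in a regular CW decomposition of $S^p$ every $(p-1)$-cell is a face of exactly two $p$-cells.

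Next I would carry out a local analysis at an interior point $x$ of $\gamma$. Since $\Sigma$ is a $p$-manifold, $x$ has a neighborhood $N\cong\mathbb{R}^p$, and after shrinking $N$ the only cell of dimension $\le p-1$ meeting $N$ is $\gamma$. The set $\gamma\cap N$ then separates $N$ into two open pieces $N_+$ and $N_-$; since each piece misses the entire $(p-1)$-skeleton and is connected, it lies in a single open $p$-cell, say $\alpha_+$ and $\alpha_-$. Regularity is what forces $\alpha_+\neq\alpha_-$: the attaching map of each $p$-cell is injective on its boundary, so $\bar{\gamma}$ appears on only one side of any given $p$-cell, preventing a single cell from occupying both $N_+$ and $N_-$. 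No further $p$-cell can meet $\gamma$ near $x$, so exactly two $p$-cells have $\gamma$ as a face, which is the desired count.

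The main obstacle is the separation step — rigorously showing that the single $(p-1)$-cell $\gamma$ cuts its manifold neighborhood into precisely two components, and that these belong to distinct top-dimensional cells. This is exactly the point where regularity is indispensable, as it supplies the injective characteristic maps ruling out self-adjacency and the local structure underlying the two-sided separation; I expect to lean on the framework of \cite{lundel} here, where the boundary of a regular cell is shown to be a regular CW-sphere. As an independent consistency check one may invoke $\partial\partial=0$: in a regular complex all incidence numbers $\langle\partial\beta,\alpha\rangle$ and $\langle\partial\alpha,\gamma\rangle$ equal $\pm1$, so $0=\langle\partial^2\beta,\gamma\rangle=\sum_{\gamma<\alpha<\beta}(\pm1)$ forces the number of intermediate cells to be even, and the topological argument above upgrades this from ``even and positive'' to ``exactly two.''
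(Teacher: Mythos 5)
The paper does not prove this lemma at all: it is quoted from Lundell--Weingram and used as a black box, so there is no in-paper argument to compare yours against. Judged on its own, your outline follows what is essentially the standard route taken in the cited reference: pass to the boundary sphere $\dot{\beta}\cong S^p$, which inherits a regular CW structure from the proper faces of $\beta$ (this already uses the nontrivial fact that in a regular complex each closed cell is a subcomplex), and reduce to showing that in a regular CW decomposition of a closed $p$-manifold every $(p-1)$-cell lies in the closure of exactly two $p$-cells. That reduction is correct and well organized.

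The gap is the one you flag yourself, and it is genuinely the whole content of the lemma: the assertion that $\gamma\cap N$ separates the coordinate neighborhood $N\cong\mathbb{R}^p$ into exactly two components is a local Jordan--Brouwer separation statement, and nothing in your sketch proves it. Likewise, the claim that a single $p$-cell cannot occupy both sides needs more than ``the attaching map is injective on the boundary''; a clean way to see it is that if $N\subset\bar{\alpha}$ held, then $h_\alpha^{-1}(N)$ would be an open subset of the disk $e^p$ containing a boundary point yet homeomorphic to $\mathbb{R}^p$, contradicting invariance of domain. The usual rigorous substitute for the point-set separation is a local homology computation, comparing $H_p(\Sigma,\Sigma\setminus\{x\})\cong\mathbb{Z}$ for the manifold $\Sigma=\dot{\beta}$ with the cellular description of that group in terms of the $p$-cells incident to $\gamma$. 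Your $\partial\circ\partial=0$ check is sound and not circular (incidence numbers being $\pm1$ in a regular complex does not presuppose this lemma), but note it only yields that the count is \emph{even}; it gives neither positivity nor the bound by two, so it cannot replace the topological step. In short: right strategy, consistent with the cited source, but the decisive step is deferred to that source rather than proved.
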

By using this lemma, we prove the following proposition.
\begin{prop}
\label{relation between cell and degree}
Let $M$ be a regular quasiconvex cell complex. For a $(p+1)$-cell $\tau^{p+1}$ and a $p$-cell $\sigma^{p}$ of $M$ with $\tau>\sigma$, we have
\begin{eqnarray}
d_\tau - n_\tau -1 =d_\sigma - n_\sigma -1 =\#N_2 (\tau>\sigma).
\end{eqnarray}
\end{prop}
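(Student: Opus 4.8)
The plan is to decompose each degree into an ``upward'' and a ``downward'' contribution and then match these pieces against the 0- and 2-neighbor vectors. I would write $d_\sigma = U_\sigma + L_\sigma$, where $U_\sigma$ is the number of $(p+1)$-cells $\tau'$ with $\tau' > \sigma$ and $L_\sigma$ is the number of $(p-1)$-cells $\rho$ with $\rho < \sigma$; similarly $d_\tau = U_\tau + L_\tau$, with $U_\tau$ the number of $(p+2)$-cells over $\tau$ and $L_\tau$ the number of $p$-cells under $\tau$. The target then becomes to show $d_\tau - n_\tau - 1 = d_\sigma - n_\sigma - 1 = U_\tau + L_\sigma$ and to identify $U_\tau + L_\sigma$ with $\# N_2(\tau > \sigma)$.

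First I would count the two families of 2-neighbor vectors directly from Lemma \ref{lundel}. For a vector $(\mu > \tau')$, the cell $\mu$ is a $(p+2)$-coface of $\tau$ and hence also has $\sigma$ as a face; applying Lemma \ref{lundel} to the pair $\gamma = \sigma$, $\beta = \mu$ produces exactly two $(p+1)$-cells between them, one being $\tau$ and the other a unique $\tau' \neq \tau$. Thus these vectors biject with the $(p+2)$-cofaces of $\tau$ and number $U_\tau$. Symmetrically, for $(\sigma' > \rho)$ the cell $\rho$ is a $(p-1)$-face of $\sigma$ and lies below $\tau$; Lemma \ref{lundel} applied to $\gamma = \rho$, $\beta = \tau$ yields $\sigma$ and a unique $\sigma' \neq \sigma$, so these vectors biject with the $(p-1)$-faces of $\sigma$ and number $L_\sigma$. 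Since the two families are disjoint (they live in distinct dimensions), $\# N_2(\tau > \sigma) = U_\tau + L_\sigma$.

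Next I would invoke quasiconvexity to partition the remaining neighbors. Among the $L_\tau - 1$ $p$-faces of $\tau$ other than $\sigma$, a given $\sigma'$ either shares a $(p-1)$-face with $\sigma$ or not; in the former case quasiconvexity applied one dimension below $\tau$ forces the shared face to be unique, so the correspondence $\rho \mapsto \sigma'$ from the previous paragraph is injective and its image is exactly the set of such $\sigma'$, of size $L_\sigma$. The $p$-faces sharing no $(p-1)$-face with $\sigma$ are precisely those $\sigma'$ with $(\tau > \sigma')$ a 0-neighbor vector, so $n_\tau = (L_\tau - 1) - L_\sigma$. An identical argument one dimension above $\tau$ shows that among the $U_\sigma - 1$ $(p+1)$-cofaces of $\sigma$ other than $\tau$, those sharing a $(p+2)$-coface with $\tau$ are exactly the $U_\tau$ cells $\tau'$ produced above, quasiconvexity guaranteeing that distinct $\mu$ give distinct $\tau'$; the remainder give 0-neighbor vectors, so $n_\sigma = (U_\sigma - 1) - U_\tau$.

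Substituting the two identities finishes the proof: $d_\tau - n_\tau - 1 = (U_\tau + L_\tau) - (L_\tau - 1 - L_\sigma) - 1 = U_\tau + L_\sigma$, and likewise $d_\sigma - n_\sigma - 1 = (U_\sigma + L_\sigma) - (U_\sigma - 1 - U_\tau) - 1 = U_\tau + L_\sigma$, which equals $\# N_2(\tau > \sigma)$ by the counting step. The main obstacle I anticipate is the bookkeeping around quasiconvexity: one must check that it rules out two distinct cells of the same dimension sharing two codimension-one faces, so that the correspondences $\rho \mapsto \sigma'$ and $\mu \mapsto \tau'$ are genuinely bijective onto the relevant neighbor sets, and that Lemma \ref{lundel} is applied at the correct dimension pairs $(\mu,\sigma)$ and $(\tau,\rho)$ rather than at naively adjacent dimensions.
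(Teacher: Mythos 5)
Your proof is correct and takes essentially the same route as the paper's: both arguments partition the neighborhoods of $\tau$ and $\sigma$, using Lemma \ref{lundel} to produce the unique second cell between a $(p-1)$-/$(p+1)$-codimension pair and quasiconvexity to make the resulting correspondences with $N_2(\tau>\sigma)$ bijective. The only difference is bookkeeping --- the paper runs a four-type classification of $\Gamma(\tau)$ and invokes symmetry for $\sigma$, whereas you make the up/down split of the degrees explicit and record the intermediate identity $\#N_2(\tau>\sigma)=U_\tau+L_\sigma$ --- but the substance is identical.
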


\begin{figure}[h]
  \begin{center}
   \includegraphics[scale=0.5]{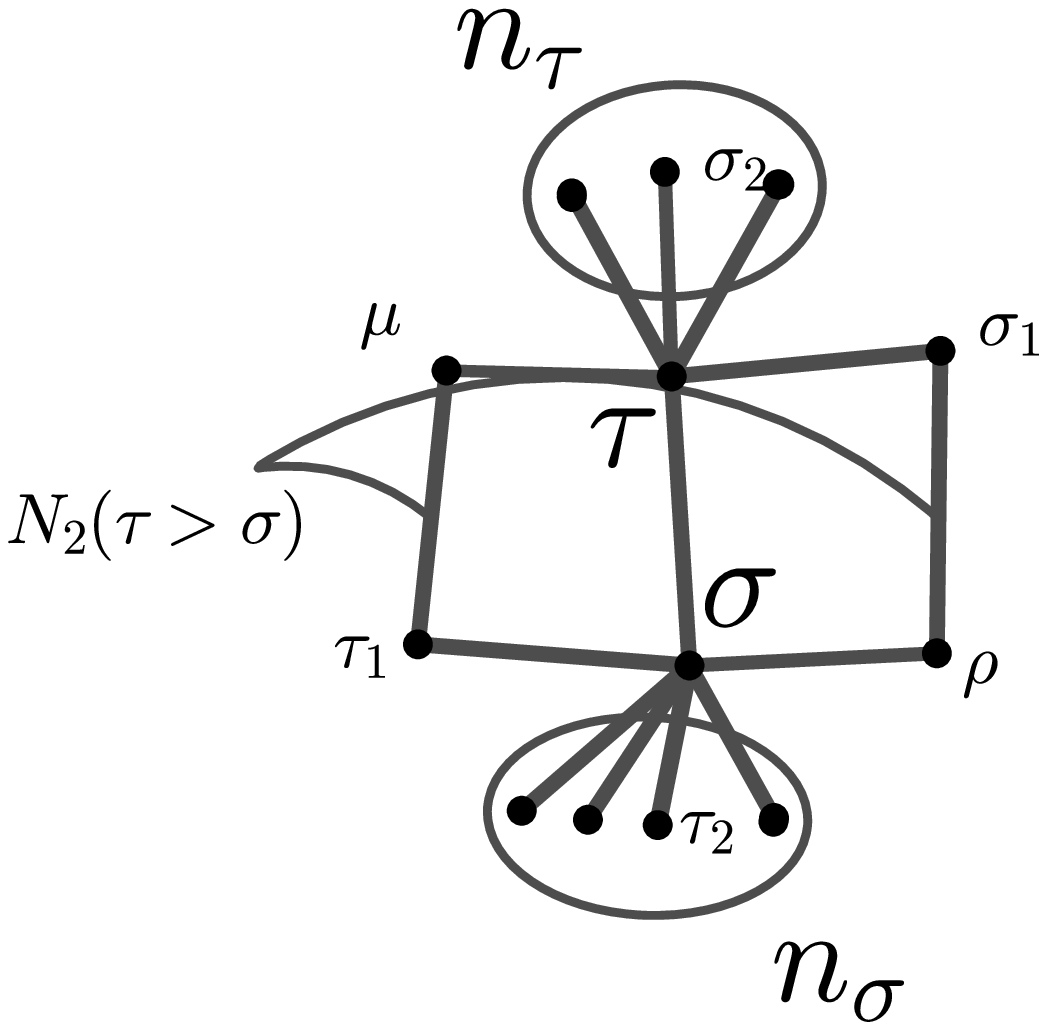}
   \caption{The neighbors of $\sigma$ and $\tau$}
  \end{center}
  \end{figure}
  
\begin{proof}
We prove the equality for the cell $\tau$, i.e., 
\begin{eqnarray}
d_\tau - n_\tau -1 =\#N_2 (\tau>\sigma).
\end{eqnarray}
The equality for the cell $\sigma$ is proved in the same way. The neighborhood of the cell $\tau$ is one of the following four types: 
\begin{enumerate}
 \item The cell $\sigma$.
 \item A p-cell $\sigma_2$ such that $(\tau>\sigma_2)$ are 0-neighbor vectors, i.e., there exist no $(p-1)$-cells $\rho$ such that $\sigma,\sigma_2>\rho$.
 \item A p-cell $\sigma_1$ such that $(\sigma_1>\rho)$ are 2-neighbor vectors, i.e., there exist $(p-1)$-cells $\rho$ such that $\tau>\sigma>\rho$, $\tau>\sigma_1>\rho$ and $\sigma \neq \sigma'$.
 \item A (p+2)-cell $\mu>\tau$.
\end{enumerate}
By the definition of $n_\tau$, the number of p-cells $\sigma_2$ as in 2 is $n_\tau$. Since $M$ is a regular quasiconvex cell complex, for a cell $\sigma_1$ as in 3, there exists a unique $(p-1)$-cell $\rho$ such that $(\sigma_1>\rho)$ is a 2-neighbor vector. By Lemma \ref{lundel}, for a cell $\mu$ as in 4, there exists a unique $(p+1)$-cell $\tau'$ such that ($\mu>\tau'$) is a 2-neighbor vector. Then the total number of cells $\sigma_1$ as in 3 and $\mu$ as in 4 is $\# N_2(\tau>\sigma)$.

Since the number of neighbor cells of $\tau$ is $d_\tau$, we have
\begin{eqnarray}
d_\tau=1+n_\tau +\# N_2(\tau>\sigma).
\end{eqnarray}
This completes the proof of the proposition. 
\end{proof}

For a vector $v$ on $M$, we denote the set of 0-neighbor vectors of $v$ by $N_0(v)$ and the set of 2-neighbor vectors of $v$ by $N_2(v)$ (see Definition \ref{02 vector}). For two real numbers $s$ and $t$, we set
\begin{eqnarray}
s \wedge t = \min \{s, t \},\\
s \vee t = \max \{ s, t\}.
\end{eqnarray}

We have Theorem \ref{relate} from the following two comparisons.
\subsection{Comparison 1}
We first establish the comparison by the coupling.
\begin{thm}
Let $\tau^{p+1}$ and $\sigma^{p}$ be a $(p+1)$-cell and a $p$-cell of $M$ with $\tau>\sigma$. Then we have
\begin{eqnarray}
\kappa (\tau, \sigma) \geq \frac{\operatorname{Ric}(\tau>\sigma)}{d_\tau \vee d_\sigma}+2\left( \frac{1}{d_\tau \wedge d_\sigma} - \frac{1}{d_\tau \vee d_\sigma} \right) + \frac{d_\tau \wedge d_\sigma}{d_\tau \vee d_\sigma} -1.
\end{eqnarray}
\end{thm}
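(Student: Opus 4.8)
The plan is to exhibit an explicit coupling between $m^\alpha_\tau$ and $m^\alpha_\sigma$, bound its transport cost from above, and then pass to the limit $\alpha\to 1$. Since $d(\tau,\sigma)=1$ we have $\kappa_\alpha(\tau,\sigma)=1-W(m^\alpha_\tau,m^\alpha_\sigma)$, and $\kappa(\tau,\sigma)=\lim_{\alpha\to1}\kappa_\alpha(\tau,\sigma)/(1-\alpha)$, so any upper bound of the form $W(m^\alpha_\tau,m^\alpha_\sigma)\le 1-(1-\alpha)R$ yields $\kappa(\tau,\sigma)\ge R$. First I would use Proposition \ref{relation between cell and degree} to stratify the two neighborhoods. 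The support of $m^\alpha_\tau$ is $\{\tau\}\cup\Gamma(\tau)$, where $\Gamma(\tau)$ consists of $\sigma$, the $n_\tau$ cells $\sigma_2$ with $(\tau>\sigma_2)\in N_0(\tau>\sigma)$, the $p$-cells $\sigma_1$ arising from the lower $2$-neighbor vectors, and the $(p+2)$-cells $\mu$ arising from the upper $2$-neighbor vectors; symmetrically $\Gamma(\sigma)$ consists of $\tau$, the $n_\sigma$ cells $\tau_2$, the $(p-1)$-cells $\rho$, and the $(p+1)$-cells $\tau_1$. The crucial observation is that each lower $2$-neighbor vector $(\sigma_1>\rho)$ pairs a cell $\sigma_1\in\Gamma(\tau)$ with a cell $\rho\in\Gamma(\sigma)$, and each upper $2$-neighbor vector $(\mu>\tau_1)$ pairs $\mu\in\Gamma(\tau)$ with $\tau_1\in\Gamma(\sigma)$; hence each such partner pair is joined by an edge of $G_M$ and lies at distance $1$. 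Every remaining relevant pair is at distance at most $3$ by the triangle inequality through $\tau$ and $\sigma$, with $d(\sigma_2,\sigma)\le 2$ and $d(\tau,\tau_2)=d(\tau,\rho)=d(\tau,\tau_1)\le 2$.

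Assume without loss of generality that $d_\tau\ge d_\sigma$ (the situation is symmetric under reversing the face relation, so the opposite case follows by interchanging $\tau$ and $\sigma$), and abbreviate $a=(1-\alpha)/d_\tau\le b=(1-\alpha)/d_\sigma$. For $\alpha$ sufficiently close to $1$ I would define the coupling $A$ by: leaving $b$ units of mass at $\tau$ and $a$ units at $\sigma$ (distance $0$); transporting the full mass $a$ from each $\sigma_1$ to its partner $\rho$ and from each $\mu$ to its partner $\tau_1$ across the distance-$1$ edge; moving the residual mass $\alpha-b$ at $\tau$ to $\sigma$ along $(\tau>\sigma)$; and finally distributing the mass $n_\tau a$ sitting on the cells $\sigma_2$, sending $b-a$ of it to $\sigma$ (distance $2$) and the rest to the still-unsaturated sinks $\tau_2,\rho,\tau_1$ (distance $\le 3$). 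Using $\#N_0(\tau>\sigma)=n_\tau+n_\sigma$ together with $d_\tau=1+n_\tau+\#N_2(\tau>\sigma)$ and $d_\sigma=1+n_\sigma+\#N_2(\tau>\sigma)$ from Proposition \ref{relation between cell and degree}, a direct check confirms that the marginals of $A$ are exactly $m^\alpha_\tau$ and $m^\alpha_\sigma$ and that $b-a\le n_\tau a$, so the assignment is admissible.

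Summing the contributions, the transport cost equals $\#N_2(\tau>\sigma)\,a+(\alpha-b)+2(b-a)+3\big(n_\tau a-(b-a)\big)$, and substituting $\alpha=1-d_\tau a$ with $d_\tau=1+n_\tau+\#N_2(\tau>\sigma)$ collapses this to $1+2n_\tau a-2b$. Hence $W(m^\alpha_\tau,m^\alpha_\sigma)\le 1+2n_\tau a-2b$, so $\kappa_\alpha(\tau,\sigma)\ge 2b-2n_\tau a=(1-\alpha)\big(2/d_\sigma-2n_\tau/d_\tau\big)$, and dividing by $1-\alpha$ and letting $\alpha\to1$ gives $\kappa(\tau,\sigma)\ge 2/d_\sigma-2n_\tau/d_\tau$. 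I would then rewrite the right-hand side of the statement using $\operatorname{Ric}(\tau>\sigma)=2-n_\tau-n_\sigma$ and the identity $d_\tau-n_\tau=d_\sigma-n_\sigma$: with $d_\tau\vee d_\sigma=d_\tau$ and $d_\tau\wedge d_\sigma=d_\sigma$, one checks that the claimed bound simplifies to precisely $2/d_\sigma-2n_\tau/d_\tau$, which finishes Comparison 1.

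The main obstacle is the combinatorial bookkeeping of the first step: one must verify that the lower and upper $2$-neighbor vectors genuinely induce a \emph{bijective} pairing of the relevant cells of $\Gamma(\tau)$ with those of $\Gamma(\sigma)$ at graph-distance $1$ (this is exactly where quasiconvexity and Lemma \ref{lundel} are used), and that the leftover cells are reachable within distance $3$ so that the assigned distances yield a legitimate upper bound for $W$. Once the neighborhoods are correctly stratified the constants are forced, and the matching upper bound on $\kappa(\tau,\sigma)$ required for the equality in Theorem \ref{relate} is then supplied independently by the Kantorovich duality in Comparison 2.
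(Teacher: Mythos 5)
Your proposal is correct and follows essentially the same route as the paper's Comparison 1: an explicit coupling whose admissibility and cost rest on Proposition \ref{relation between cell and degree} (the distance-one matching of the $2$-neighbor cells together with the identity $d_\tau-n_\tau-1=d_\sigma-n_\sigma-1=\#N_2(\tau>\sigma)$), distance bounds of $2$ and $3$ for the remaining pairs, and the limit $\alpha\to 1$. Your transport plan is organized slightly differently --- you keep the overlapping mass at $\tau$ and at $\sigma$ in place and move only the excess, whereas the paper ships all of $\tau$'s mass to $\sigma$ and routes mass from the cells $\sigma_2$ back to $\tau$ --- but after simplification the two plans have the same total cost and give the identical lower bound.
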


\begin{proof}
By Proposition \ref{relation between cell and degree}, we have 
\begin{eqnarray}
d_\tau - n_\tau -1 =d_\sigma - n_\sigma -1 =\#N_2 (\tau>\sigma).
\end{eqnarray}
Assume that $d_{\sigma} \geq d_{\tau}$, that is
\begin{eqnarray*}
d_{\sigma}= d_\tau \vee d_\sigma,\ d_{\tau}=d_\tau \wedge d_\sigma,
\end{eqnarray*}
we construct the coupling $A$ between $m_{\tau}$ and $m_{\sigma}$. 
So, we define a map  $A: S \times  S \rightarrow [0,1]$ by

\begin{eqnarray*}
\begin{cases}
A(\tau,\sigma)=\alpha, & \\
A(\sigma_2, \tau) =\frac{1}{n_\tau}\frac{1-\alpha}{d_\sigma} &{\rm if}~ (\tau>\sigma^{p}_2) \in N_0(\tau>\sigma),\\
A(\sigma, \tau_2) =\frac{1}{n_\sigma}\frac{1-\alpha}{d_\tau} &{\rm if}~ (\tau^{p+1}_2 > \sigma) \in N_0(\tau>\sigma),\\
A(\mu,\tau_1)=\frac{1-\alpha}{d_\sigma} &{\rm if}~ (\mu^{p+2}>\tau_1^{p+1}) \in N_2(\tau>\sigma),\\
A(\sigma_1,\rho)=\frac{1-\alpha}{d_\sigma} &{\rm if}~ (\sigma^p_1>\rho^{p-1}) \in N_2(\tau>\sigma),\\
A(\mu,\tau_2)=\frac{1}{n_\sigma} \left( \frac{1-\alpha}{d_\tau} - \frac{1-\alpha}{d_\sigma}  \right) &{\rm if}~ (\mu^{p+2}>\tau_1^{p+1}) \in N_2(\tau>\sigma) , \ (\tau^{p+1}_2,\sigma) \in N_0(\tau>\sigma), \\
A(\sigma_1,\tau_2)=\frac{1}{n_\sigma} \left( \frac{1-\alpha}{d_\tau} - \frac{1-\alpha}{d_\sigma}  \right) &{\rm if}~(\sigma^p_1>\rho^{p-1}) \in N_2(\tau>\sigma) , \ (\tau^{p+1}_2,\sigma) \in N_0(\tau>\sigma), \\
A (\sigma_2,\tau_2)=\frac{1}{n_\sigma} \left( \frac{1-\alpha}{d_\tau} - \frac{1}{n_{\tau}}\frac{1- \alpha}{d_{\sigma}} \right) &{\rm if}~ (\tau>\sigma_2^{p}) , \ (\tau_2^{p+1}>\sigma) \in N_0(\tau>\sigma),\\
A(\lambda_1,\lambda_2)=0& \mathrm{otherwise}.
\end{cases}
\end{eqnarray*}
Since we eventually consider the limit as $\alpha$ approaches $1$, we take $\alpha$ so large that $0 < \alpha - \frac{1-\alpha}{d_\sigma} < 1$. 
\begin{claim}
This map $A: S \times  S \rightarrow [0,1]$ is a coupling between $m_\tau$ and $m_{\sigma}$.
\end{claim}

For a $(p+1)$-cell $\tau_2$ with $ (\tau_2>\sigma) \in N_0(\tau>\sigma)$, we obtain
\begin{eqnarray*}
\sum_{\lambda \in S} A(\lambda, \tau_2) &=& \frac{1}{n_\sigma}  \frac{1-\alpha}{d_\tau} + \frac{1}{n_\sigma} \left( \frac{1-\alpha}{d_\tau} - \frac{1-\alpha}{d_\sigma}  \right) (d_\sigma - n_\sigma -1) + \frac{n_\tau}{n_\sigma}\left( \frac{1-\alpha}{d_\tau}-\frac{1}{n_{\tau}}\frac{1-\alpha}{d_{\sigma}}\right)\\
&=& \frac{1}{n_{\sigma}}\frac{1 - \alpha}{d_{\tau}} \left\{ 1 + (d_\tau - n_\tau -1) + n_{\tau} \right\}+ \frac{1}{n_{\sigma}}\frac{1 - \alpha}{d_{\sigma}}\left\{ -(d_\sigma - n_\sigma -1) -1 \right\}\\
&=&\frac{1 - \alpha}{d_\sigma} = m^{\alpha}_\sigma (\tau_2).
\end{eqnarray*}

For a $p$-cell $\sigma_2$ with $(\tau>\sigma^{p}_2) \in N_0(\tau>\sigma)$, we obtain
\begin{eqnarray*}
\sum_{\lambda \in S} A(\sigma_2, \lambda) &=& \frac{1}{n_\tau}  \frac{1-\alpha}{d_\sigma} + \frac{n_\sigma}{n_\sigma}\left( \frac{1-\alpha}{d_\tau}-\frac{1}{n_{\tau}}\frac{1-\alpha}{d_{\sigma}}\right)\\
&=&\frac{1 - \alpha}{d_\tau} = m^{\alpha}_\tau (\sigma_2).
\end{eqnarray*}

For the other cases this is obvious. This implies the claim.

From the definition of the Wasserstein distance, we have
\begin{eqnarray*}
W(m^\alpha_\tau,m^\alpha_\sigma) &\leq&  \sum_{\lambda_1, \lambda_2\in S} A(\lambda_1, \lambda_2) d (\lambda_1,\lambda_2)\\
&=& \alpha + \frac{1-\alpha}{d_{\tau}} + \frac{1-\alpha}{d_\sigma} (d_\tau-n_\tau -1 ) + 3\left( \frac{1-\alpha}{d_\tau} -\frac{1-\alpha}{d_\sigma} \right) (d_\tau - n_\tau -1) \\
&& + \frac{1-\alpha}{d_\sigma} +3 \left( \frac{1-\alpha}{d_\tau} - \frac{1}{n_{\tau}}\frac{1-\alpha}{d_{\sigma}}\right) n_\tau\\
&=& \alpha + \frac{1-\alpha}{d_{\tau}}\left\{1+ 3(d_\tau-n_\tau -1 )+ 3n_{\tau} \right\} - \frac{1-\alpha}{d_{\sigma}}\left\{2(d_\sigma-n_\sigma -1) +2 \right\} \\
&=& \alpha + (1-\alpha) \left\{ 3- \frac{2}{d_\tau} -\frac{(d_\sigma - n_\sigma) + (d_\tau  -n_\tau)}{d_\sigma} \right\}.
\end{eqnarray*}
By the definition of $n_{\sigma}$ and $n_{\tau}$, we obtain
\begin{eqnarray*}
W(m^\alpha_\tau,m^\alpha_\sigma) &\leq& \alpha + (1-\alpha) \left(3- \frac{2}{d_\tau} -\frac{d_\sigma +d_\tau - \# N_0 (\tau>\sigma)}{d_\sigma} \right)\\
&=& \alpha + (1-\alpha) \left\{3- \frac{2}{d_\tau} -\frac{d_\sigma +d_\tau - (2-\operatorname{Ric}(\tau>\sigma))}{d_\sigma} \right\}\\
&=& \alpha + (1-\alpha) \left\{- \frac{\operatorname{Ric}(\tau>\sigma)}{d_\sigma} - 2 \left( \frac{1}{d_\tau} - \frac{1}{d_\sigma} \right) -\frac{d_\tau}{d_\sigma} +2 \right\}.
\end{eqnarray*}
This implies that
\begin{eqnarray*}
\kappa_\alpha (\tau,\sigma) &=& 1-W(m^\alpha_\tau,m^\alpha_\sigma) \\
&\geq& (1-\alpha) \left\{ \frac{\operatorname{Ric}(\tau>\sigma)}{d_{\sigma}}  + 2 \left( \frac{1}{d_\tau} - \frac{1}{d_\sigma} \right) +\frac{d_\tau}{d_\sigma} -1 \right\}.
\end{eqnarray*}
Thus we have
\begin{eqnarray*}
\kappa(\tau,\sigma) &=& \lim_{\alpha \rightarrow 1} \frac{ \kappa_\alpha (\tau,\sigma)}{1-\alpha} \\
&\geq&   \frac{\operatorname{Ric}(\tau>\sigma)}{d_{\sigma}}  + 2 \left( \frac{1}{d_\tau} - \frac{1}{d_\sigma} \right) +\frac{d_\tau}{d_\sigma} -1  .
\end{eqnarray*}
　In the case of $d_{\tau} > d_\sigma$, we consider the following map $A'$.
\begin{eqnarray*}
\begin{cases}
A'(\tau,\sigma)=\alpha, & \\
A'(\sigma_2, \tau) =\frac{1}{n_\tau}\frac{1-\alpha}{d_\sigma} &{\rm if}~ (\tau>\sigma^{p}_2) \in N_0(\tau>\sigma),\\
A'(\sigma, \tau_2) =\frac{1}{n_\sigma}\frac{1-\alpha}{d_\tau} &{\rm if}~ (\tau^{p+1}_2 > \sigma) \in N_0(\tau>\sigma),\\
A'(\mu,\tau_1)=\frac{1-\alpha}{d_\tau} &{\rm if}~ (\mu^{p+2}>\tau_1^{p+1}) \in N_2(\tau>\sigma),\\
A'(\sigma_1,\rho)=\frac{1-\alpha}{d_\tau} &{\rm if}~ (\sigma^p_1>\rho^{p-1}) \in N_2(\tau>\sigma),\\
A'(\sigma_2,\tau_1)=\frac{1}{n_\tau} \left( \frac{1-\alpha}{d_\sigma} - \frac{1-\alpha}{d_\tau}  \right) &{\rm if}~ (\mu^{p+2}>\tau_1^{p+1}) \in N_2(\tau>\sigma) , \ (\tau^{p+1}_2,\sigma) \in N_0(\tau>\sigma), \\
A'(\sigma_2,\sigma_1)=\frac{1}{n_\tau} \left( \frac{1-\alpha}{d_\sigma} - \frac{1-\alpha}{d_\tau}  \right)&{\rm if}~(\sigma^p_1>\rho^{p-1}) \in N_2(\tau>\sigma) , \ (\tau^{p+1}_2,\sigma) \in N_0(\tau>\sigma), \\
A' (\sigma_2,\tau_2)=\frac{1}{n_\tau} \left( \frac{1-\alpha}{d_\sigma} - \frac{1}{n_{\sigma}}\frac{1- \alpha}{d_{\tau}} \right) &{\rm if}~ (\tau>\sigma_2^{p}) , \ (\tau_2^{p+1}>\sigma) \in N_0(\tau>\sigma),\\
A'(\lambda_1,\lambda_2)=0& \mathrm{otherwise}.
\end{cases}
\end{eqnarray*}
This map $A'$ is also a coupling between $m_\tau$ and $m_{\sigma}$. So, we obtain
\begin{eqnarray*}
W(m^\alpha_\tau,m^\alpha_\sigma) &\leq&  \sum_{\lambda_1, \lambda_2\in S} A'(\lambda_1, \lambda_2) d (\lambda_1,\lambda_2)\\
&=& \alpha + (1-\alpha) \left\{ 3- \frac{2}{d_\sigma} -\frac{(d_\sigma - n_\sigma) + (d_\tau  -n_\tau)}{d_\tau} \right\},
\end{eqnarray*}
which implies
\begin{eqnarray*}
\kappa(\tau,\sigma) \geq   \frac{\operatorname{Ric}(\tau>\sigma)}{d_{\tau}}  + 2 \left( \frac{1}{d_\sigma} - \frac{1}{d_\tau} \right) +\frac{d_\sigma}{d_\tau} -1  .
\end{eqnarray*}
This completes the proof of the theorem. 
\end{proof}

\subsection{Comparison 2}
Secondly we establish a comparison by the Kantorovich-Rubistein duality.
\begin{lem}
\label{odd}
Let $M$ be a regular cell complex. Then there is no odd cycle in $G_M$.
\end{lem}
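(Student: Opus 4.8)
The plan is to show that $G_M$ is bipartite, since a graph is bipartite if and only if it has no odd cycle. The natural bipartition is by the parity of the dimension of the cells. Recall that the vertices of $G_M$ are the cells of $M$, and the edges of $G_M$ are the vectors $(\tau > \sigma)$, i.e.\ pairs of cells whose dimensions differ by exactly one. This is precisely the condition we need: every edge of $G_M$ joins a cell of some dimension $p$ to a cell of dimension $p+1$, so every edge joins an even-dimensional cell to an odd-dimensional cell.

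First I would partition the vertex set $S$ into
\begin{eqnarray*}
S_{\mathrm{even}} = \{ \sigma \in S \mid \dim \sigma \text{ is even} \}, \quad
S_{\mathrm{odd}} = \{ \sigma \in S \mid \dim \sigma \text{ is odd} \}.
\end{eqnarray*}
By the definition of a vector, if $(\tau > \sigma)$ is an edge of $G_M$ then $\dim \tau = \dim \sigma + 1$, so exactly one of $\tau, \sigma$ lies in $S_{\mathrm{even}}$ and the other in $S_{\mathrm{odd}}$. Hence no edge of $G_M$ joins two vertices of the same class, which means $G_M$ is bipartite with respect to $(S_{\mathrm{even}}, S_{\mathrm{odd}})$.

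Next I would invoke the standard fact that a bipartite graph contains no cycle of odd length. Concretely, consider any closed walk $a_0, a_1, \ldots, a_n = a_0$ in $G_M$; since consecutive cells differ in dimension by exactly one, the parity of $\dim a_i$ alternates at each step, so to return to $a_0$ with $\dim a_n = \dim a_0$ the number of steps $n$ must be even. In particular any cycle in $G_M$ has even length, so there is no odd cycle.

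The argument is essentially immediate once the bipartition is set up, so there is no serious obstacle; the only point requiring care is the convention about edges of $G_M$. The definition identifies the edges of $G_M$ with the vectors of $M$, and a vector is by definition a face-pair with dimensions differing by one, so each edge genuinely crosses between the two dimension-parity classes. I would simply remark that this dimension-difference property of vectors is what forces the bipartite structure, and conclude the lemma.
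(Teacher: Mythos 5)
Your proof is correct and uses the same key observation as the paper: every edge of $G_M$ joins cells whose dimensions differ by one, so dimension parity alternates along any walk and an odd cycle is impossible. The paper phrases this directly as a parity contradiction along the cycle rather than as bipartiteness, but the argument is essentially identical.
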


\begin{proof}
Let $\sigma_0,...,\sigma_{2n-1}=\sigma_0$ be a cycle in $G_M$. We assume that $\operatorname{dim} \sigma_0$ is even. Then the dimension of $\sigma_1$ is odd. By continuing this process, $\operatorname{dim} \sigma_{2n-2}$ is even. This is a contradiction to
\begin{eqnarray}
| \operatorname{dim} \sigma_{2n-2} - \operatorname{dim} \sigma_{2n-1} |=1.
\end{eqnarray}
If the dimension of the $\sigma_0$ is odd, then the similar argument is constructed. The proof is completed.
\end{proof}

\begin{thm}\label{copa2}
Let $\tau^{p+1}$ and $\sigma^{p}$ be a $(p+1)$-cell and a $p$-cell such that $\tau>\sigma$. Then we have
\begin{eqnarray}
\kappa (\tau, \sigma) \leq\frac{\operatorname{Ric}(\tau>\sigma)}{d_\tau \vee d_\sigma}+2\left( \frac{1}{d_\tau \wedge d_\sigma} - \frac{1}{d_\tau \vee d_\sigma} \right) + \frac{d_\tau \wedge d_\sigma}{d_\tau \vee d_\sigma} -1.
\end{eqnarray}
\end{thm}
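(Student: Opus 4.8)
The plan is to combine the Kantorovich--Rubinstein duality with one explicit $1$-Lipschitz test function, chosen so that the resulting lower bound for $W(m^\alpha_\tau,m^\alpha_\sigma)$ matches exactly the upper bound produced by the coupling in Comparison~1. Since $\tau>\sigma$ is an edge we have $d(\tau,\sigma)=1$, so $\kappa_\alpha(\tau,\sigma)=1-W(m^\alpha_\tau,m^\alpha_\sigma)$; the easy direction of the duality, namely $W(m^\alpha_\tau,m^\alpha_\sigma)\ge \sum_{c\in S} f(c)\,(m^\alpha_\tau(c)-m^\alpha_\sigma(c))$ for every $1$-Lipschitz $f$, then gives an \emph{upper} bound on $\kappa_\alpha$ and hence on $\kappa$. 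It suffices to treat the case $d_\sigma\ge d_\tau$; the case $d_\tau>d_\sigma$ follows by interchanging the roles of $\tau$ and $\sigma$, which is exactly what produces the symmetric $\vee/\wedge$ form of the statement.

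First I would record the classification of $\Gamma(\tau)$ and $\Gamma(\sigma)$ from the proof of Proposition~\ref{relation between cell and degree}: apart from $\sigma$, the set $\Gamma(\tau)$ consists of the $n_\tau$ faces $\sigma_2$ with $(\tau>\sigma_2)\in N_0$, the faces $\sigma_1$ carrying a $2$-neighbor $(\sigma_1>\rho)$, and the $(p+2)$-cells $\mu>\tau$; dually $\Gamma(\sigma)$ consists of $\tau$, the $n_\sigma$ cells $\tau_2$ of $N_0$, the cells $\tau_1$ carrying a $2$-neighbor $(\mu>\tau_1)$, and the $(p-1)$-cells $\rho<\sigma$. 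Lemma~\ref{lundel} and quasiconvexity supply the bijections $\mu\leftrightarrow\tau_1$ and $\sigma_1\leftrightarrow\rho$ underlying $N_2$. Then I would define the test function by $f(\tau)=1$, $f(\sigma)=0$, $f\equiv 2$ on all of $\Gamma(\tau)\setminus\{\sigma\}$ (that is, on every $\sigma_2$, $\sigma_1$, $\mu$), $f\equiv -1$ on the $N_0$-cells $\tau_2$, and $f\equiv 1$ on the remaining neighbors $\tau_1$ and $\rho$ of $\sigma$, extending $f$ to all of $S$ by McShane's formula $f(c)=\min_{a\in A}\big(f(a)+d(c,a)\big)$ over the finite set $A=\{\tau,\sigma\}\cup\Gamma(\tau)\cup\Gamma(\sigma)$.

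The core of the argument is to check that this $f$ is genuinely $1$-Lipschitz, for which it is enough (by McShane) to verify $|f(c)-f(c')|\le d(c,c')$ for all $c,c'\in A$. Most pairs are immediate, including the two tight incidences $|f(\mu)-f(\tau_1)|=|f(\sigma_1)-f(\rho)|=1=d$; the parity of every graph distance, which forbids spurious short paths between cells of a given pair of dimensions, is controlled by Lemma~\ref{odd}. The delicate constraints are those with value gap $3$, namely between a value-$2$ neighbor of $\tau$ (any of $\sigma_2,\sigma_1,\mu$) and a value-$(-1)$ cell $\tau_2$, where I must show the two cells lie at graph distance at least $3$. This is precisely where the hypotheses enter and where I expect the main obstacle: a $p$-cell $\sigma_2$ or $\sigma_1$ cannot be a face of $\tau_2$, for otherwise $\tau$ and $\tau_2$ would share two $p$-faces, contradicting quasiconvexity; and $\mu$ cannot cover $\tau_2$, since by Lemma~\ref{lundel} the only two $(p+1)$-cells between $\sigma$ and $\mu$ are $\tau$ and its $2$-neighbor partner $\tau_1\neq\tau_2$. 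Hence every such pair is at distance $\ge 3$ and the Lipschitz condition holds.

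Finally, since $m^\alpha_\tau-m^\alpha_\sigma$ is supported on $A$ and vanishes off it, a direct computation of $\sum_{c\in A} f(c)\,(m^\alpha_\tau(c)-m^\alpha_\sigma(c))$ --- using that $\Gamma(\tau)\setminus\{\sigma\}$ has $d_\tau-1$ elements each of $m^\alpha_\tau$-mass $(1-\alpha)/d_\tau$, that $n_\sigma=d_\sigma-1-\#N_2$, and that $\operatorname{Ric}(\tau>\sigma)=2-\#N_0$ with $\#N_0=n_\tau+n_\sigma$ --- yields $W(m^\alpha_\tau,m^\alpha_\sigma)\ge \alpha+(1-\alpha)\big(3-\tfrac{2}{d_\tau}-\tfrac{2+2\#N_2}{d_\sigma}\big)$, the exact value attained by the coupling in Comparison~1. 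Dividing $\kappa_\alpha(\tau,\sigma)=1-W(m^\alpha_\tau,m^\alpha_\sigma)$ by $1-\alpha$ and letting $\alpha\to 1$ gives $\kappa(\tau,\sigma)\le \tfrac{\operatorname{Ric}(\tau>\sigma)}{d_\sigma}+2\big(\tfrac{1}{d_\tau}-\tfrac{1}{d_\sigma}\big)+\tfrac{d_\tau}{d_\sigma}-1$, which together with the symmetric case $d_\tau>d_\sigma$ is the claimed inequality.
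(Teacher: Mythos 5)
Your proposal is correct and follows essentially the same route as the paper: the identical test function (value $2$ on $\Gamma(\tau)\setminus\{\sigma\}$, $-1$ on the $0$-neighbor cells $\tau_2$, $1$ on the remaining neighbors of $\sigma$), the easy direction of Kantorovich--Rubinstein, and the same bookkeeping via $n_\tau+n_\sigma=\#N_0=2-\operatorname{Ric}(\tau>\sigma)$. Your verification of the $1$-Lipschitz property for the gap-$3$ pairs via quasiconvexity and Lemma~\ref{lundel} is in fact more explicit than the paper's brief parity remark.
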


\begin{proof}
We assume that $d_\sigma \geq d_\tau$. Similarly as in the previous theorem, we set the numbers $n_{\tau}$ and $n_{\sigma}$ by
\begin{eqnarray}
n_{\tau} = \# \{ \sigma^p_2 | (\tau>\sigma_2)\in N_0(\tau >\sigma)  \},\\
n_{\sigma} = \# \{ \tau^{p+1}_2 | (\tau_2>\sigma)\in N_0(\tau >\sigma)  \}.
\end{eqnarray}
We define the function $f : \Gamma(\sigma) \cup \Gamma(\tau) \rightarrow {\bf R}$ by
\begin{eqnarray*}
\begin{cases}
f(\tau)=1, &\\
f(\sigma)=0, &\\
f(\mu)=2, \ f(\tau_1)=1 &{\rm if}~ (\mu^{p+2}>\tau_1^{p+1}) \in N_2(\tau>\sigma),\\
f(\sigma_1)=2, \ f(\rho)=1 &{\rm if}~  (\sigma^p_1>\rho^{p-1}) \in N_2(\tau>\sigma),\\
f(\sigma_2)=2 &{\rm if}~ (\tau>\sigma_2^p)\in N_0(\tau>\sigma),\\
f(\tau_2)=-1 &{\rm if}~ (\tau_2^{p+1}>\sigma)\in N_0(\tau>\sigma).\\
\end{cases}
\end{eqnarray*}
For $(\tau>\sigma_2^p)\in N_0(\tau>\sigma)$ and $(\tau_2^{p+1}>\sigma)\in N_0(\tau>\sigma)$, we see $|f(\sigma_2) - f(\tau_2) |=3$. The distance between the two cells $\sigma_2$ and $\tau_2$ is 3, since there is no 5-cycles. By Lemma \ref{odd}, it is clear that $f$ is an 1-Lipschitz function over $\Gamma(\sigma) \cup \Gamma(\tau)$ with respect to the graph distance on $G_{M}$ so that $f$ can be extended to an 1-Lipschitz function over $S$.\\
　From the Kantorovich-Rubistein duality, we have
\begin{eqnarray*}
W(m^\alpha_\tau,m^\alpha_\sigma) &\geq&  \sum_{\lambda\in S} f(\lambda)(m^\alpha_\tau(\lambda) - m^\alpha_{\sigma} (\lambda))\\
&=& 1 \cdot (\alpha - \frac{1-\alpha}{d_\sigma}) + 2(d_{\tau} -1 ) \frac{1-\alpha}{d_\tau} - (d_\sigma - n_\sigma -1)\frac{1-\alpha}{d_\sigma} + n_\sigma \frac{1-\alpha}{d_\sigma}\\
&=& \alpha + 2(d_{\tau} -1 ) \frac{1-\alpha}{d_\tau} - \left( d_\sigma - 2n_\sigma \right)\frac{1 - \alpha}{d_{\sigma}} \\
&=& \alpha + 2 (1 - \alpha) - \frac{2(1 - \alpha)}{d_\tau} + (1 - \alpha) - 2  \left( d_\sigma - n_\sigma \right) \frac{1 - \alpha}{d_{\sigma}} \\
&=& \alpha + (1-\alpha) \left\{ 3- \frac{2}{d_\tau} -\frac{(d_\sigma - n_\sigma) + (d_\tau  -n_\tau)}{d_\sigma} \right\}\\
&=& \alpha + (1 - \alpha) \left\{ 3 - \frac{2}{d_\tau} - \frac{d_\sigma + d_\tau - (2-\operatorname{Ric}(\tau>\sigma))}{d_\sigma} \right\}\\
&=& \alpha + (1 - \alpha) \left\{ - \frac{\operatorname{Ric}(\tau>\sigma)}{d_\sigma} + 2\left( \frac{1}{d_\sigma} - \frac{1}{d_\tau} \right) - \frac{d_\tau}{d_\sigma} +2 \right\}.
\end{eqnarray*}

This implies that
\begin{eqnarray*}
\kappa_\alpha (\tau,\sigma) &=& 1-W(m^\alpha_\tau,m^\alpha_\sigma) \\
&\leq& (1-\alpha) \left\{ \frac{\operatorname{Ric}(\tau>\sigma)}{d_{\sigma}} + 2\left( \frac{1}{d_\tau} - \frac{1}{d_\sigma} \right) + \frac{d_\tau}{d_\sigma} -1  \right\}.
\end{eqnarray*}

Thus we have
\begin{eqnarray*}
\kappa(\tau,\sigma) &=& \lim_{\alpha \rightarrow 1} \frac{ \kappa_\alpha (\tau,\sigma)}{1-\alpha} \\
&\leq&\frac{\operatorname{Ric}(\tau>\sigma)}{d_{\sigma}} + 2\left( \frac{1}{d_\tau} - \frac{1}{d_\sigma} \right) + \frac{d_\tau}{d_\sigma} -1  .
\end{eqnarray*}

In the case of $d_\tau \geq d_\sigma$, we consider the following function $g$.
\begin{eqnarray*}
\begin{cases}
g(\tau)=0, &\\
g(\sigma)=1, &\\
g(\mu)=1, \ f(\tau_1)=2 &{\rm if}~ (\mu^{p+2}>\tau_1^{p+1}) \in N_2(\tau>\sigma),\\
g(\sigma_1)=1, \ f(\rho)=2 &{\rm if}~  (\sigma^p_1>\rho^{p-1}) \in N_2(\tau>\sigma),\\
g(\sigma_2)=-1 &{\rm if}~ (\tau>\sigma_2^p)\in N_0(\tau>\sigma),\\
g(\tau_2)=2 &{\rm if}~ (\tau_2^{p+1}>\sigma)\in N_0(\tau>\sigma).\\
\end{cases}
\end{eqnarray*}
Since the function $g$ is also 1-Lipschitz function, by the Kantorovich-Rubinstein duality, we obtain 
\begin{eqnarray*}
W(m^\alpha_\tau,m^\alpha_\sigma) &\geq&  \sum_{\lambda\in S} g(\lambda)(m^\alpha_\tau(\lambda) - m^\alpha_{\sigma} (\lambda))\\
&=& \alpha + (1 - \alpha) \left\{ - \frac{\operatorname{Ric}(\tau>\sigma)}{d_\tau} + 2\left( \frac{1}{d_\tau} - \frac{1}{d_\sigma} \right) - \frac{d_\sigma}{d_\tau} +2 \right\},
\end{eqnarray*}
which implies
\begin{eqnarray*}
\kappa(\tau,\sigma) \leq   \frac{\operatorname{Ric}(\tau>\sigma)}{d_{\tau}}  + 2 \left( \frac{1}{d_\sigma} - \frac{1}{d_\tau} \right) +\frac{d_\sigma}{d_\tau} -1  .
\end{eqnarray*}

This completes the proof of the theorem.
\end{proof}

\begin{remark}
For a cell complex $M$, if the graph $G_M$ is a $d$-regular graph, i.e., the degrees of all cells of $M$ are constant $d$, then Theorem \ref{relate} yields
\begin{eqnarray}
\frac{\operatorname{Ric}(\tau > \sigma)}{d} = \kappa(\tau, \sigma)
\end{eqnarray}
for any vector $(\tau>\sigma)$.
For the combinatorially flat space $K^n$ that decomposes the $n$-dimensional Euclid space ${\bf R}^n$ by $n$-cubes (see Example \ref{flat}), we have
\begin{eqnarray}
\operatorname{Ric}(\tau > \sigma)=0
\end{eqnarray}
for any vector $(\tau>\sigma)$ and the degree of any cell is $2n$. Therefore we have
\begin{eqnarray}
\kappa(\tau, \sigma)=0
\end{eqnarray}
for any two cells $\tau$ and $\sigma$ of $K^n$.
\end{remark}

\section{The estimate of the first non-zero Laplacian eigenvalue of a cell complex}
In this section we would like to obtain the estimate of the first non-zero Laplacian eigenvalue of a cell complex by the LLY-Ricci curvature. The Laplacian of a cell complex is represented as follows. From the equation \eqref{laplacian_for_func}, for a function $f\in \Omega^0 (M)$, we have
\begin{eqnarray}
\Delta f (\sigma^p) &=& - \sum_{\tau^{p+1} ; \tau>\sigma} (f(\tau) - f(\sigma)) + \sum_{\rho^{p-1}; \sigma>\rho} (f(\sigma) - f(\rho))\\
 &=& d_\sigma f(\sigma) - \sum_{\tau^{p+1} ; \tau>\sigma} f(\tau) - \sum_{\rho^{p-1}; \sigma>\rho}f(\rho).
\end{eqnarray}
This Laplacian is equal to the non normalized Laplacian on the graph $G_M$.\\
　For the LLY-Ricci curvature, the Myers' type theorem for a graph is proved in \cite{linluyau}.
\begin{thm}[\cite{linluyau}]\label{myers-g}
Suppose that $\kappa (\tau>\sigma) \geq \kappa >0$ for any vector $(\tau>\sigma)$ on $M$ and for a real number $\kappa > 0$. Then the diameter of a cell complex $M$ is bounded as follows:
\begin{eqnarray}
\operatorname{diam}(M)\leq \frac{2}{\kappa}.
\end{eqnarray}
\end{thm}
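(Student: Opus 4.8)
The plan is to avoid constructing anything new and instead to combine two facts already available in the excerpt: the promotion of the curvature lower bound from vectors to arbitrary pairs of cells, and the universal upper bound $\kappa_\alpha(\sigma,\sigma')\le (1-\alpha)\,\tfrac{2}{d(\sigma,\sigma')}$. The whole argument is then an elementary squeeze on $d(x,y)$ for an arbitrary pair $x,y$, so no diameter-realizing pair need even be assumed to exist.

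First I would invoke the last Lemma of Section 3 (\cite{linluyau}): since the hypothesis asserts $\kappa(\tau>\sigma)\ge\kappa>0$ for \emph{every} vector $(\tau>\sigma)$ on $M$, that lemma upgrades this to $\kappa(\sigma,\sigma')\ge\kappa$ for every \emph{pair} of cells $\sigma,\sigma'$ of $M$. This promotion is essential, because the pairs relevant to the diameter are generally not vectors. Next I would fix an arbitrary pair of distinct cells $x,y$ and apply the earlier Lemma (\cite{linluyau}) stating $\kappa_\alpha(x,y)\le(1-\alpha)\tfrac{2}{d(x,y)}$; dividing by $(1-\alpha)$ and letting $\alpha\to1$ gives $\kappa(x,y)\le \tfrac{2}{d(x,y)}$. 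Chaining the two bounds, $\kappa\le\kappa(x,y)\le\tfrac{2}{d(x,y)}$, which rearranges to $d(x,y)\le \tfrac{2}{\kappa}$. Since $x,y$ were arbitrary, taking the supremum over all pairs yields $\operatorname{diam}(M)\le\tfrac{2}{\kappa}$.

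For completeness I would also record why the quantitative upper-bound lemma holds, as it is the only genuinely analytic input. I would apply the Kantorovich–Rubinstein duality to the $1$-Lipschitz test function $f(z)=d(y,z)$: writing $D=d(x,y)$ and using $f(y)=0$, $f(x)=D$, the value $f(w)\ge D-1$ for $w$ adjacent to $x$, and $f(w)=1$ for $w$ adjacent to $y$, a direct evaluation gives $\sum_z f(z)\bigl(m^\alpha_x(z)-m^\alpha_y(z)\bigr)\ge D-2(1-\alpha)$, hence $W(m^\alpha_x,m^\alpha_y)\ge D-2(1-\alpha)$ and therefore $\kappa_\alpha(x,y)=1-W(m^\alpha_x,m^\alpha_y)/D\le 2(1-\alpha)/D$.

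Honestly there is no deep obstacle here; the statement is a packaging of two prior lemmas. The only points that require care are (i) that the curvature hypothesis is promoted from vectors to all pairs before it is used, so that the inequality applies to the cells realizing large distances, and (ii) that the chosen test function $d(y,\cdot)$ is genuinely $1$-Lipschitz on $G_M$ — which is just the triangle inequality for the graph distance — so that the duality lower bound on $W(m^\alpha_x,m^\alpha_y)$ is legitimate. With these checks in place the inequality $\operatorname{diam}(M)\le 2/\kappa$ follows immediately.
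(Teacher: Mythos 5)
Your proof is correct. The paper itself gives no proof of this theorem --- it is quoted from \cite{linluyau} --- and your argument (promote the curvature bound from vectors to arbitrary pairs via the last lemma of Section 3, then squeeze against the bound $\kappa(x,y)\le 2/d(x,y)$ obtained from $\kappa_\alpha(x,y)\le(1-\alpha)\,2/d(x,y)$ in the limit $\alpha\to 1$) is exactly the standard derivation in that reference, with the auxiliary verification of the upper-bound lemma via the test function $d(y,\cdot)$ also carried out correctly.
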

\noindent By using Theorem \ref{myers-g}, we obtain the result about the estimate of the first non-zero Laplacian eigenvalue of a cell complex. 

\begin{thm}\label{estimate}
Let $M$ be a finite cell complex and $\lambda_1$ the first non-zero Laplacian eigenvalue of $M$. If $\kappa(\tau>\sigma)\geq \kappa >0 $ for any vector $(\tau>\sigma)$ on $M$, then we have
\begin{eqnarray}
\lambda_1 \geq \frac{d_{\max} d_{\min} \kappa^2}{\kappa d_{\max} + 2 (d_{\max} - d_{\min})},
\end{eqnarray}
where $d_{\max} = \max_{\sigma \in S} d_\sigma$ and $\ d_{\min} = \min_{\sigma \in S} d_\sigma$.
\end{thm}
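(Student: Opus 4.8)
The plan is to run the Kantorovich--Rubinstein duality against an eigenfunction of $\Delta$, in the same spirit as Comparison~2. Let $f$ be a function on $S$ with $\Delta f = \lambda_1 f$. Since $\lambda_1 \neq 0$ and the kernel of the non-normalized Laplacian of the (connected) graph $G_M$ is spanned by the constants, $f$ is orthogonal to the constants, so $\sum_{\sigma \in S} f(\sigma) = 0$; in particular $f$ is nonconstant. After rescaling I may assume $f$ is $1$-Lipschitz on $G_M$ with Lipschitz constant $1$ attained at a pair $(x_0,y_0)$, so that $f(x_0) - f(y_0) = d(x_0,y_0)$.

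The first computation I would carry out identifies the averaging operator of the random walk with the Laplacian. Writing $P^\alpha f(\sigma) = \sum_{\lambda \in S} m^\alpha_\sigma(\lambda) f(\lambda) = \alpha f(\sigma) + \frac{1-\alpha}{d_\sigma}\sum_{c \in \Gamma(\sigma)} f(c)$ and substituting $\sum_{c \in \Gamma(\sigma)} f(c) = d_\sigma f(\sigma) - \Delta f(\sigma)$ (from the Laplacian formula in this section), I get $P^\alpha f(\sigma) = f(\sigma) - \frac{1-\alpha}{d_\sigma}\Delta f(\sigma) = \left(1 - \frac{(1-\alpha)\lambda_1}{d_\sigma}\right) f(\sigma)$. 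Applying the Kantorovich--Rubinstein duality to the $1$-Lipschitz function $f$ at $(x_0,y_0)$ yields $W(m^\alpha_{x_0}, m^\alpha_{y_0}) \geq P^\alpha f(x_0) - P^\alpha f(y_0) = d(x_0,y_0) - (1-\alpha)\lambda_1\left(\frac{f(x_0)}{d_{x_0}} - \frac{f(y_0)}{d_{y_0}}\right)$. Combining with $W(m^\alpha_{x_0},m^\alpha_{y_0}) = (1-\kappa_\alpha(x_0,y_0))\, d(x_0,y_0)$, dividing by $1-\alpha$, letting $\alpha \to 1$, and invoking $\kappa(x_0,y_0) \geq \kappa$, I reach the key inequality $\kappa\, d(x_0,y_0) \leq \lambda_1\left(\frac{f(x_0)}{d_{x_0}} - \frac{f(y_0)}{d_{y_0}}\right)$.

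It then remains to bound the right-hand side by the degrees. I would split $\frac{f(x_0)}{d_{x_0}} - \frac{f(y_0)}{d_{y_0}} = \frac{f(x_0)-f(y_0)}{d_{x_0}} + f(y_0)\left(\frac{1}{d_{x_0}} - \frac{1}{d_{y_0}}\right)$, bounding the first summand by $\frac{d(x_0,y_0)}{d_{\min}}$. For the second I need a uniform bound on $|f(y_0)|$, and this is where Myers' theorem (Theorem~\ref{myers-g}) enters: since $f$ is $1$-Lipschitz with $\sum_\sigma f(\sigma)=0$ on a space of diameter at most $2/\kappa$, the range of $f$, and hence $\|f\|_\infty$, is at most $2/\kappa$. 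Thus the second summand is at most $\frac{2}{\kappa}\left(\frac{1}{d_{\min}} - \frac{1}{d_{\max}}\right)$, and with $\ell := d(x_0,y_0) \geq 1$ the key inequality becomes $\kappa \ell \leq \lambda_1\left(\frac{\ell}{d_{\min}} + \frac{2}{\kappa}\left(\frac{1}{d_{\min}} - \frac{1}{d_{\max}}\right)\right)$. Solving for $\lambda_1$ gives a lower bound that is monotone increasing in $\ell$, so substituting the worst case $\ell = 1$ produces exactly $\lambda_1 \geq \frac{d_{\max} d_{\min}\kappa^2}{\kappa d_{\max} + 2(d_{\max}-d_{\min})}$.

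The step I expect to be the main obstacle is the estimate of $\frac{f(x_0)}{d_{x_0}} - \frac{f(y_0)}{d_{y_0}}$. In the regular case the degrees cancel and one recovers the clean bound $\lambda_1 \geq \kappa d$, but for a non-regular $G_M$ the degree-difference term must be absorbed, which forces the use of the diameter estimate to control $\|f\|_\infty$. Pinning down the precise constant $2(d_{\max}-d_{\min})$ (rather than a cruder estimate) relies on combining $\|f\|_\infty \leq 2/\kappa$ with the monotonicity in $\ell$, and one must check that the normalization $\sum_\sigma f(\sigma)=0$ and the connectedness of $G_M$ are genuinely available in this setting.
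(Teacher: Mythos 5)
Your proposal is correct and follows essentially the same route as the paper's proof: apply the Kantorovich--Rubinstein duality with the rescaled eigenfunction as the $1$-Lipschitz test function, use the identity $\sum_\lambda m^\alpha_\sigma(\lambda)f(\lambda)=\bigl(1-\tfrac{(1-\alpha)\lambda_1}{d_\sigma}\bigr)f(\sigma)$, and control the degree-difference term via the Myers diameter bound $\|f\|_\infty\leq 2/\kappa$. The only cosmetic differences are that the paper takes the extremal pair to be an adjacent one (an edge of $G_M$, where the Lipschitz constant of a graph function is always attained) instead of your general pair plus monotonicity in $\ell$, and it tracks the sign of $\tfrac{1}{d_\sigma}-\tfrac{1}{d_\tau}$ rather than using an absolute-value bound.
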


\begin{proof}
Let $f$ be an eigenfunction with respect to $\lambda_1$ on $M$. Without loss of generality we assume that $f$ is a 1-Lipschitz function and 
\begin{eqnarray}
\sup_{\lambda\neq \lambda' } \frac{|f(\lambda) - f(\lambda')| }{d(\lambda , \lambda')}=1.
\end{eqnarray}
Then there exists a vector $(\tau>\sigma)$ on $M$ with
\begin{eqnarray}
f(\tau) - f(\sigma)=1,
\end{eqnarray}
by changing the sign of $f$ if necessary. We assume $d_\tau \leq d_\sigma$ without loss of generality. Then we have
\begin{eqnarray}
\Delta f (\sigma^p)  &=& d_\sigma f(\sigma) - \sum_{\tau^{p+1} ; \tau>\sigma} f(\tau) - \sum_{\rho^{p-1}; \sigma>\rho}f(\rho)\\
&=& \lambda_1 f(\sigma),
\end{eqnarray}
and
\begin{eqnarray}
 \sum_{\tau^{p+1} ; \tau>\sigma} f(\tau) + \sum_{\rho^{p-1}; \sigma>\rho}f(\rho) = (d_\sigma - \lambda_1)f(\sigma).
\end{eqnarray}
The Wasserstein distance is estimated by
\begin{eqnarray*}
W(m^\alpha_\tau,m^\alpha_\sigma) &\geq&  \sum_{\lambda\in S} f(\lambda)(m^\alpha_\tau(\lambda) - m^\alpha_{\sigma} (\lambda))\\
&=& \sum_{\mu; \mu>\tau} f(\mu) \frac{1-\alpha}{d_\tau} + \sum_{\sigma_1; \tau>\sigma_1}f(\sigma_1) \frac{1-\alpha}{d_\tau} + f(\tau)\alpha\\
         &&- \sum_{\tau_1; \tau_1 >\sigma} f(\tau_1) \frac{1-\alpha}{d_\sigma} - \sum_{\rho;\sigma>\rho} f(\rho)\frac{1-\alpha}{d_\sigma} - f(\sigma) \alpha \\
&=& (d_\tau - \lambda_1) f(\tau) \frac{1-\alpha}{d_\tau} + f(\tau)\alpha - (d_\sigma - \lambda_1)f(\sigma)\frac{1-\alpha}{d_\sigma} - f(\sigma)\alpha\\
&=& (f(\tau) - f(\sigma)) - (1-\alpha) \lambda_1 \left( \frac{f(\tau)}{d_\tau} - \frac{f(\sigma)}{d_\sigma} \right) \\
&=& 1 + (1 - \alpha) \lambda_1 \left( -\frac{1}{d_\tau} + \left( \frac{1}{d_\sigma} - \frac{1}{d_\tau} \right) f(\sigma) \right).
\end{eqnarray*}
Since $f$ is a 1-Lipschitz function, we have 
\begin{eqnarray}
\max_\nu f(\nu) - \min_\nu f(\nu) \leq \operatorname{diam}(M).
\end{eqnarray}
Since $f$ is an eigenfunction of a non-zero value $\lambda_1$, the function $f$ is orthogonal to the constant function and $\min_\nu f(\nu)$ is negative. Therefore, we obtain
\begin{eqnarray}
\max_\nu f(\nu)  \leq \operatorname{diam}(M).
\end{eqnarray}
From Theorem \ref{myers-g} we have
\begin{eqnarray}
\max_\nu f(\nu) \leq \frac{2}{\kappa}.
\end{eqnarray}
This yields
\begin{eqnarray*}
W(m^\alpha_\tau,m^\alpha_\sigma) &\geq& 1 + (1 - \alpha) \lambda_1 \left( -\frac{1}{d_\tau} + \left( \frac{1}{d_\sigma} - \frac{1}{d_\tau} \right) \frac{2}{\kappa} \right)\\
&\geq&  1 + (1 - \alpha) \lambda_1 \left(- \frac{1}{d_{\min}} + \left( \frac{1}{d_{\max}} - \frac{1}{d_{\min}} \right) \frac{2}{\kappa} \right).
\end{eqnarray*}
Then the $\alpha$-Ricci curvature is
\begin{eqnarray*}
\kappa_\alpha (\tau,\sigma) &=& 1-W(m^\alpha_\tau,m^\alpha_\sigma) \\
&\leq& (1-\alpha)\lambda_1 \left( \frac{1}{d_{\min}} + \left( \frac{1}{d_{\min}} - \frac{1}{d_{\max}} \right) \frac{2}{\kappa} \right).
\end{eqnarray*}
Thus we have
\begin{eqnarray*}
\kappa(\tau,\sigma) &=& \lim_{\alpha \rightarrow 1} \frac{ \kappa_\alpha (\tau,\sigma)}{1-\alpha} \\
&\leq&\lambda_1 \left( \frac{1}{d_{\min}} + \left( \frac{1}{d_{\min}} - \frac{1}{d_{\max}} \right) \frac{2}{\kappa} \right).
\end{eqnarray*}
This implies that
\begin{eqnarray*}
\lambda_1 \geq \frac{\kappa}{\frac{1}{d_{\min}} + \left( \frac{1}{d_{\min}} - \frac{1}{d_{\max}} \right) \frac{2}{\kappa}}\\
\geq \frac{d_{\max} d_{\min} \kappa^2}{\kappa d_{\max} + 2 (d_{\max} - d_{\min})}.
\end{eqnarray*}
The proof is completed
\end{proof}

\begin{example}
Let $C^n$ be the boundary of the $(n+1)$-simplex. We set the vertices $v_0,...,v_{n+1}$ and represent $p$-cell by $[v_{i_0}, v_{i_1}, .., v_{i_p}]$. For an edge $[v_{i_0},v_{i_1}]$ and a vertex $v_{i_0}$, 0-neighbor of the vector $([v_{i_0},v_{i_1}]>v_{i_0})$ is the vector $([v_{i_0},v_{i_1}]>v_{i_1})$. We obtain
\begin{eqnarray}
\operatorname{Ric} ([v_{i_0},v_{i_1}]>v_{i_0}) =1.
\end{eqnarray}
The adjacent cell of $v_{i_0}$ are $[v_{i_0}, v_{j}]$ for $j=0,...,i_0-1,i_0+1,...,n+1$. The adjacent cell of $[v_{i_0},v_{i_1}]$ are two vertices, $v_{i_0}$ and $v_{j_0}$, and 2-cells $[v_{i_0}, v_{i_1}, v_{j}]$ for $j=0,...,n+1$ except for $i_0,i_1$. From Theorem \ref{relate} we have
\begin{eqnarray}
\kappa([v_{i_0},v_{i_1}]>v_{i_0}) &=& \frac{1}{n+2} + 2\left( \frac{1}{n+1} - \frac{1}{n+2} \right) + \frac{n+1}{n+2}-1 \nonumber \\
&=& \frac{2}{(n+1)(n+2)}.
\end{eqnarray}
For a $n$-cell $[v_{i_0}, v_{i_1} .., v_{i_n}]$ and a $(n-1)$-cell $[v_{i_0}, v_{i_1}, ..., v_{i_{j-1}}, v_{i_{j+1}} .., v_{i_n}]$, 0-neighbor of the vector $([v_{i_0}, v_{i_1} .., v_{i_n}]>[v_{i_0}, v_{i_1}, ..., v_{i_{j-1}}, v_{i_{j+1}} .., v_{i_n}])$ is the vector $([v_{i_0}, v_{i_1}, ..., v_{i_{j-1}}, v_{i_{j+1}} .., v_{i_{n+1}}]>[v_{i_0}, v_{i_1}, ..., v_{i_{j-1}}, v_{i_{j+1}} .., v_{i_{n}}])$, where $v_{i_{n+1}}$ is the vertex that is not included in $[v_{i_0}, v_{i_1} .., v_{i_n}]$. We obtain
\begin{eqnarray}
\operatorname{Ric}([v_{i_0}, v_{i_1} .., v_{i_n}]>[v_{i_0}, v_{i_1}, ..., v_{i_{j-1}}, v_{i_{j+1}} .., v_{i_n}]) =1.
\end{eqnarray}
The adjacent cell of $[v_{i_0}, v_{i_1} .., v_{i_n}]$ are $[v_{i_0}, v_{i_1}, ..., v_{i_{k-1}}, v_{i_{k+1}} .., v_{i_{n+1}}]$ for $k=0,...,n+1$. The adjacent cell of $[v_{i_0}, v_{i_1}, ..., v_{i_{j-1}}, v_{i_{j+1}} .., v_{i_{n}}]$ are two $n$-cells, $[v_{i_0}, v_{i_1} .., v_{i_n}]$ and $[v_{i_0}, v_{i_1}, ..., v_{i_{j-1}}, v_{i_{j+1}} .., v_{i_{n+1}}]$, and $(n-2)$-cells that do not include 2 vertices $v_{i_j},v_{i_{n+1}}$. From Theorem \ref{relate} we have
\begin{eqnarray}
\kappa([v_{i_0}, v_{i_1} .., v_{i_n}]&>&[v_{i_0}, v_{i_1}, ..., v_{i_{j-1}}, v_{i_{j+1}} .., v_{i_n}]) \nonumber \\
&=& \frac{1}{n+2} + 2\left( \frac{1}{n+1} - \frac{1}{n+2} \right) + \frac{n+1}{n+2}-1\nonumber\\
&=& \frac{2}{(n+1)(n+2)}.
\end{eqnarray}

For other vector $v$ in $C^n$, 0-neighbor vectors do not exist. We obtain
\begin{eqnarray}
\operatorname{Ric} (v) =2.
\end{eqnarray}
In a similar way to before cases, for $p=1,...,n-1$, the number of the adjacent cell of a $p$-cell is $n+2$. Then we have
\begin{eqnarray}
\kappa (v) =\frac{2}{n+2}.
\end{eqnarray}
Thus the lower bound of the LLY-Ricci curvature on $C^n$ is
\begin{eqnarray}
\kappa=\frac{2}{(n+1)(n+2)}>0.
\end{eqnarray}
From Theorem \ref{estimate}, we have the inequality for the first non-zero eigenvalue of $C^n$,
\begin{eqnarray}
\lambda_1 &\geq&\frac{(n+1)(n+2) \left( \frac{2}{(n+1)(n+2)} \right)^2 }{\frac{2}{(n+1)(n+2)} (n+2) +2(n+1)(n+2) }\\
&=& \frac{2}{(n+2)^2}.
\end{eqnarray}
\end{example}

\begin{figure}[tbh]
  \begin{center}
   \includegraphics[width=70mm]{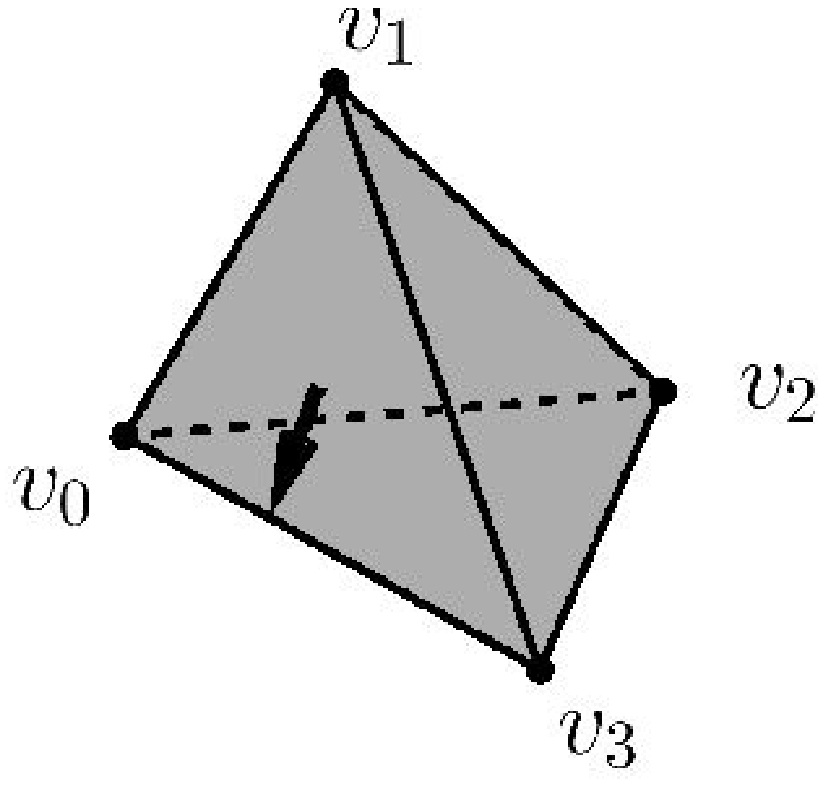}
  \end{center}
  \caption{2-complex $C^2$}
  \label{cone}
  \end{figure}

\end{document}